\documentclass{amsart}
\usepackage{amsmath,amsfonts} 
\usepackage{amsthm} 
\usepackage{amssymb} 
\usepackage[T1]{fontenc} 
\usepackage[utf8]{inputenc}
\usepackage{textcomp}
\usepackage[dvips]{graphicx}
\usepackage{verbatim}
\usepackage{todonotes}
\usepackage{mathrsfs}
\usepackage[english]{babel}
\usepackage[babel]{csquotes}
\usepackage{xy}
\xyoption{all}
\usepackage{hyperref}
\usepackage[style=alphabetic,hyperref,backend=bibtex]{biblatex}
\bibliography{bibliography}

\theoremstyle{definition}
\newtheorem{definition}{Definition}[section]
\theoremstyle{plain}
\newtheorem{theorem}[definition]{Theorem}
\newtheorem*{theoremno}{Theorem}
\newtheorem{proposition}[definition]{Proposition}
\newtheorem{lemma}[definition]{Lemma}
\newtheorem{corollary}[definition]{Corollary}
\theoremstyle{remark}
\newtheorem{remark}[definition]{Remark}
\newtheorem{example}[definition]{Example}

\newtheorem*{question}{Question}
\DeclareMathOperator{\EFin}{EFin}
\DeclareMathOperator{\Ext}{Ext}
\DeclareMathOperator{\Hom}{Hom}
\DeclareMathOperator{\coker}{coker}
\DeclareMathOperator{\Spec}{Spec}

\DeclareMathOperator{\Gal}{Gal}
\DeclareMathOperator{\GL}{GL}

\DeclareMathOperator{\Aut}{Aut}

\DeclareMathOperator{\Repf}{Repf}
\DeclareMathOperator{\Mod}{Mod}
\DeclareMathOperator{\Modf}{Modf}

\DeclareMathOperator{\Stab}{Stab}
\DeclareMathOperator{\HG}{HG}
\DeclareMathOperator{\ob}{Ob}

\DeclareMathOperator{\Bun}{Bun}
\DeclareMathOperator{\Vtf}{Vtf}

\DeclareMathOperator{\SL}{SL}

\newcommand{\fr}{\mathsf{for}}
\newcommand{\aut}{\underline{\Aut}^\otimes}
\newcommand{\E}{\mathbb{E}}
\newcommand{\F}{\mathbb{F}}

\newcommand{\ebar}{{\bar\eta}}
\newcommand{\Oh}{\mathscr{O}}

\newcommand{\N}{\mathbb{N}}

\newcommand{\D}{\mathscr{D}}

\newcommand{\de}{\partial}

\newcommand{\A}{\mathbb{A}}
\newcommand{\R}{\mathbb{R}}

\newcommand{\bx}{{\bar{x}}}

\begin{document}

\title[The base change of the monodromy group]{The base change of the monodromy group for geometric Tannakian pairs}
\author{Giulia Battiston}
\date{\today}
\address{ Ruprecht-Karls-Universität Heidelberg -Mathematisches Institut -
Im Neuenheimer Feld 288
D-69120 Heidelberg }
\thanks{This work was supported by the MAthematics Center Heidelberg (MATCH)}
\subjclass[2010]{12F15, 19D23, 14L15}
\email{gbattiston@mathi.uni-heidelberg.de}

\begin{abstract}We prove that in any characteristic the formation of the monodromy group of a $\D$-module commutes with the extension of the ground field, extending a result of Gabber for separable extensions.
\end{abstract}
\maketitle

If $S$ is a smooth geometrically connected variety over a field of characteristic zero $K$, after choosing $s\in S(K)\neq\emptyset$, one can associate to every finite rank vector bundle $E$ endowed with a flat connection $\nabla$ an algebraic group over $K$, the \emph{monodromy group} $\pi((E,\nabla),\omega_s)$ (also called the differential Galois group in \cite{Ka:co} or holonomy group in \cite{Sa:ct}), which is constructed via Tannaka duality from the full Tannakian category spanned by $(E,\nabla)$ in the category of all flat connections on $S$.

If $L/K$ is a field extension, there are two kind of base change that one can perform on $\pi((E,\nabla),\omega_s)$: on one hand one can simply consider the base change of the monodromy group to $L$. On the other, one can first base change $(E,\nabla)$ to a vector bundle wit flat connection $(E_L,\nabla_L)$ on $S_L=S\times_{\Spec K}\Spec L$ and then construct the monodromy group of the latter. 

It is a result of Gabber (\cite[Prop.~1.3.2]{Ka:ca}) that the two construction lead to the same object, namely that the formation of the monodromy group commutes with the extension of the ground field. The proof of this result is very nice and simple, relying only on a description of such a group as stabilizer in a certain invertible matrix group of a family of subspaces that are strictly related to the categories spanned by $(E,\nabla)$ and $(E_L,\nabla_L)$ respectively, together with Galois descent. 

The key point, for our purposes, is that Galois descent is always linked to the action of the Galois group $\Gal(L/K)$ of $L$ over $K$ through the \emph{automorphisms} of some object $X$ that we want to descend. This is important because it allows to make use of the following crucial trick: if $V$ is a finite rank $L$-vector space on which $\Gal(L/K)$ acts, then for any $\alpha\in\Gal(L/K)$ and $W\subset V$ sub-vector space, one has that $\alpha(W)$ is again a $L$-vector space (as $\lambda\cdot\alpha(w)=\alpha(\alpha^{-1}(\lambda)\cdot w)$ for every $\lambda\in L$ and $w\in W$) and that
\[\Stab_{\GL(V)}(\alpha(W))=\alpha(\Stab_{GL(V)}(W)).\]

If now $K$ has positive characteristic, the very same question makes sense when working, instead than with finite rank vector bundles with flat connections, with $\D$-modules of finite rank. If $L/K$ is a separable extension the very same argument of Gabber applies, but if $L/K$ is inseparable, it is clearly impossible to use Galois descent. 
There are descent theory for (purely) inseparable field extensions (for example \cite{AS}) but they rely on the action through the \emph{endomorphism} of the object we want to descend, hence they do not allow to make use of the major trick we just described.

Instead, we chose to use the descent theory described in \cite{bat}, namely for $L/K$ finite and modular (see Definition~\ref{modular}) the descent of an algebraic object $X$ from $L$ to $K$ is related to the action of $\HG(L/K)=\Aut(L[\bx]/K[\bx])$ on $X\otimes_LL[\bx]$, where $L[\bx]=L[x]/x^{p^e}$ for suitable $e$ (see Theorem~\ref{ins}). In this way we gain an action through automorphisms but we lose of course some of the strong properties that come from working over a field. It turns out, though, that the local Artin algebra $L[\bx]$ still enjoys many good properties that allow to complete the proof following the ideas of Gabber also in the inseparable case, hence proving the main result of this article, namely
\begin{theoremno}[{see Corollary~\ref{strat}}]
Let $K$ be a field of positive characteristic and $S$ be a smooth geometrically connected variety over $K$, with $S(K)\neq\emptyset$, then for every $\D$-module of finite rank over $S$ the formation of the monodromy group commutes with the extension of the ground field.
\end{theoremno}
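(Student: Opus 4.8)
The plan is to follow the architecture of Gabber's proof, replacing Galois descent by the modular descent of Theorem~\ref{ins} throughout. First I would reduce to the case of a finite extension $L/K$: since a $\D$-module of finite rank and its tensor constructions involve only finitely many coefficients, and since the monodromy group is cut out inside a fixed $\GL$ by the sub-objects of these constructions, the formation of $\pi$ commutes with filtered unions of subfields, so it suffices to treat $L/K$ finitely generated, and then finite. Writing $L/K$ as a separable extension followed by a purely inseparable one, the separable step is handled verbatim by Gabber's argument, so the heart of the matter is a finite purely inseparable $L/K$. Finally, because the descent theory of Theorem~\ref{ins} applies to modular extensions (Definition~\ref{modular}), I would reduce the purely inseparable case to a modular one, for instance by factoring $L/K$ into a tower of modular steps or by passing to a modular closure and descending in stages.

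Next I recall the stabilizer description of the monodromy group. Fixing the fiber $V=\omega_s(E,\nabla)$, the group $\pi((E,\nabla),\omega_s)$ is the stabilizer inside $\GL(V)$ of the family of fibers at $s$ of all sub-$\D$-modules of all tensor constructions $T^{m,n}(E,\nabla)=E^{\otimes m}\otimes(E^\vee)^{\otimes n}$; likewise $\pi((E_L,\nabla_L),\omega_{s_L})$ is the stabilizer inside $\GL(V\otimes_KL)$ of the fibers of all sub-$\D_L$-modules of the $T^{m,n}(E_L,\nabla_L)$. Since extension of scalars of a sub-$\D$-module is a sub-$\D_L$-module, the family defining $\pi((E_L,\nabla_L))$ refines the base change of the family defining $\pi((E,\nabla))$, giving the easy inclusion $\pi((E_L,\nabla_L),\omega_{s_L})\subseteq\pi((E,\nabla),\omega_s)_L$. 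The whole content is the reverse inclusion.

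To obtain it, I would base change the entire picture from $L$ to the truncated polynomial algebra $L[\bx]$ rather than to $\bar K$. A sub-$\D_L$-module $W$ of $T^{m,n}(E_L,\nabla_L)$ produces a sub-$L[\bx]$-module $W\otimes_LL[\bx]$ of the free module $V\otimes_KL[\bx]$, and each $\alpha\in\HG(L/K)$ sends it to another such submodule which, by the compatibility of the $\HG(L/K)$-action with the ($K$-rational, hence $\HG(L/K)$-equivariant) connection, again arises from a sub-$\D$-module after descent. The crucial point is the analogue of Gabber's trick over the local Artinian ring $L[\bx]$: for a submodule $W$ of the free module $V\otimes_KL[\bx]$ and $\alpha\in\HG(L/K)$ one still has that $\alpha(W)$ is a submodule and
\[\Stab_{\GL(V\otimes_KL[\bx])}(\alpha(W))=\alpha\bigl(\Stab_{\GL(V\otimes_KL[\bx])}(W)\bigr),\]
because $\alpha$ acts by ring automorphisms fixing the relevant base. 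Granting this, the family of submodules defining the monodromy group over $L[\bx]$ is $\HG(L/K)$-stable, so its stabilizer is an $\HG(L/K)$-invariant subgroup of $\GL(V\otimes_KL[\bx])$, which by Theorem~\ref{ins} descends to a $K$-subgroup $H\subseteq\GL(V)$. Identifying $H$ with $\pi((E,\nabla),\omega_s)$ through the $\HG(L/K)$-invariant members of the family, which are exactly the base changes of the sub-$\D$-modules defined over $K$, then yields $\pi((E,\nabla),\omega_s)_L=\pi((E_L,\nabla_L),\omega_{s_L})$.

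The main obstacle I anticipate is precisely that $L[\bx]$ is not a field: submodules of $V\otimes_KL[\bx]$ need not be free or direct summands, so neither the stabilizer formula nor the descent of the subgroup is automatic. Verifying that the stabilizers of such submodules are well-behaved closed subgroups, that the $\HG(L/K)$-action on the lattice of submodules is compatible with taking stabilizers, and that an $\HG(L/K)$-invariant subgroup really descends in the sense of Theorem~\ref{ins}, is where the good local properties of $L[\bx]$, namely its flatness over $L$, the freeness of $V\otimes_KL[\bx]$, and the structure of $\HG(L/K)=\Aut(L[\bx]/K[\bx])$, must be exploited in place of the field-theoretic input of the classical argument.
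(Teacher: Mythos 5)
Your overall architecture is the same as the paper's: reduce to a finite modular extension (the paper's Corollary~\ref{strat} does this via finite type of the monodromy group and Theorem~\ref{perfect}, then takes a modular closure), describe the monodromy group as a stabilizer (Lemma~\ref{description}), pass from $L$ to $L[\bx]$, and replace Galois descent by $\HG(L/K)$-invariance and Proposition~\ref{ins}, using Gabber's trick $\Stab(\alpha(W))=\alpha(\Stab(W))$. The reduction steps are fine (modulo passing to a normal closure so that the separable step satisfies $(B1)$, and noting that exponent-one steps are indeed modular). But there is a genuine gap at exactly the point you flag as ``the main obstacle'' and then dispose of with ``granting this'': you never prove that the $\HG(L/K)$-invariant stabilizer $H\subset\GL(V\otimes_KL[\bx])$ you construct equals $\pi(\E_L,\omega_s)\otimes_LL[\bx]$, and without that identification Proposition~\ref{ins} cannot be invoked at all. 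That proposition descends a subspace of the form $W\subset V\otimes_KL$ under the hypothesis that $W\otimes_LL[\bx]$ is invariant; so to descend the monodromy group one must already know that the invariant object over $L[\bx]$ is \emph{extended from $L$}, namely that its defining ideal is $\mathcal{I}_{\pi(\E_L,\omega_s)}\otimes_LL[\bx]$. Over a field this identification is Katz's Lemma~\ref{description}, but over the Artinian ring $L[\bx]$ there is no Tannaka duality for free: a priori one only gets $H\subset\pi(\E_L,\omega_s)_{L[\bx]}$, since the family of subobjects over $L[\bx]$ is strictly larger than the family extended from $L$, and $H$ could conceivably be smaller.

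The paper closes this gap with machinery absent from your sketch. First, the family of subobjects must be restricted to those lying in the rigid subcategory $\mathcal{T}'_{L[\bx]}$ of locally free objects: over the Frobenius ring $L[\bx]$ one has free $=$ projective $=$ injective (Lemma~\ref{a}), so fibers of such subobjects are injective, the quotients split and are free, and the stabilizers are representable closed subgroups by \cite[I, Sec~2.12,(5)]{Jan}; that this restricted family still controls $\langle \E_{L[\bx]}\rangle_\otimes$ is the spanning statement proved in Proposition~\ref{dmod} via the surjection $p^*p_*\E\to\E$. Second, and crucially, the identification of $H$ with $\pi(\E_L,\omega_s)_{L[\bx]}$ is obtained by interposing the flat group scheme $G$ representing $\aut(\omega_{L[\bx]})$, which exists by the Tannakian theorem over rings (Theorem~\ref{ta:gen}), giving the chain $G\subset H\subset\pi(\E_L,\omega_s)_{L[\bx]}$; the defining ideals are free/projective, so the sequence \eqref{ses} splits and one may pass to the closed fiber, where the reduction functor $\centerdot_0$ of axiom $(C4)$ turns subobjects over $L[\bx]$ into subobjects over $L$ and yields $\pi(\E_L,\omega_s)\subset G_0$, forcing all three groups to coincide. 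Only then is the group seen to be $\HG(L/K)$-invariant \emph{and} extended from $L$, so that Proposition~\ref{ins} descends it to $K$, after which the endgame (Chevalley plus axiom $(C3)$) runs as in Theorem~\ref{perfect}. As written, your proposal establishes the easy inclusion and the invariance of a possibly too-small subgroup, but not the descent of $\pi(\E_L,\omega_s)$ itself.
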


Let us now describe the contents of this article: Section~\ref{one} is dedicated to a quick review of the theory of Tannaka categories, and to Katz's explicit description of the monodromy group in term of stabilizers. In Section~\ref{geom:pairs} we describe the proof of Gabber in the separable case, under a more general axiomatization that applies to many other Tannakian categories (see Ex.~\ref{ex1}), in Section~\ref{three} we define a finer axiomatization in order to prove the same theorem for $L/K$ finite and inseparable, and prove that the category of $\D$-modules over a smooth geometrically connected variety satisfies this new axiomatization. In Section~\ref{four} we give applications of the main theorem, for example completing the missing case of a theorem of Esnault and Langer (see Theorem~\ref{elgen}) and proving that the field extension morphism for many Tannakian fundamental groups is faithfully flat (see Lemma~\ref{ff}).

\subsection*{Notation} If $K$ is a field, $\Vtf_K$ denotes the category of finite dimensional $K$-vector spaces. More in general if $S$ is a scheme  $\Bun_S$ denotes the full subcategory of $\Oh_S$-coherent modules whose objects are locally free modules of finite rank. If $S=\Spec (A)$ we use $\Bun_A$ instead of $\Bun_{\Spec(A)}$. The category of modules (respectively, of finite type modules) over some ring $A$ will be denoted as $\Mod_A$ (respectively, $\Modf_A$).

Let $X$ be an object of a Tannakian category $\mathcal{T}$, then by a \emph{linear algebra construction} $\mathcal{C}(V)$ over $V$ we mean an object constructed from $V$ via a finite sequence of tensor products, direct sums and duals.

If $X$ is an object (scheme, module, and so on) defined over an algebra $A$ (or over its spectrum) and $B$ is an $A$-algebra, we will denote by $X_B$ its base change to $B$ (or to $\Spec B$).

\section{Neutral Tannakian categories}\label{one}
Let $K$ be a field of characteristic $0$ and let $S$ be a smooth $K$-variety such that $S(K)\neq \emptyset$. Let $\D\Mod(S/K)$ be the category of finite rank vector bundles over $S$ endowed with a flat connection. Let us fix $s\in S(K)$ and define
\[\omega_s:\D\Mod(S/K)\to \Vtf_K\]
by mapping $(E,\nabla)\in\D\Mod(S/K)$ to the fibre of $E$ at $s$.
It is a classical result (see \cite[VI, Sec.~1.2]{Sa:ct}) that  $(\D\Mod(S/K),\omega_s)$ is a \emph{Tannakian category} over $K$. We will not give here the complete definition of such a category, referring the reader to \cite{Sa:ct} and \cite{DM}, but note that if $K$ is a field and $(\mathcal{T}, \omega:\mathcal{T}\to  \Vtf_K)$ is a neutral Tannakian category over $K$, one has that  is in particular $\mathcal{T}$ is abelian, $K$-linear, it admits a tensor structure and every object has a dual. Moreover, if $X$ is an object in $\mathcal{T}$, there is a unique smallest full sub-Tannakian category of $\mathcal{T}$ containing $X$, that we denote by $\langle X\rangle_\otimes$. As for $\omega$, it is a $K$-linear, faithful and a tensor functor, that is, it respects the tensor structure on $\mathcal{T}$.

The main theorem in the theory of neutral Tannakian categories is the following:
\begin{theorem}[{Tannakian duality, \cite[Thm.~2.11]{DM}}]\label{tannakaduality}
Let $K$ be a field and $(\mathcal{T},\omega)$ a neutral Tannakian category over $K$. Let $\aut(\omega)$ be defined on every $K$-algebra $i:K\to A$ as
\[\aut(\omega)(A)=\{i^*\circ\omega\simeq i^*\circ\omega\text{ isomorphism of tensor functors}\},\]
then $\aut(\omega)$ is representable by an affine $K$-group scheme denoted by $\pi(\mathcal{T},\omega)$ and 
\[F_\omega:(\mathcal{T},\omega)\to(\Repf_K\pi(\mathcal{T},\omega),\fr)\]
induced by $\omega$ is an equivalence of neutral tensor categories, where $\fr$ is the forgetful functor.
\end{theorem}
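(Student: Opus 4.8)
\subsection*{Proof proposal}

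The plan is to reduce the statement to a single elementary type of field extension and then to run Gabber's stabilizer argument over a base that is no longer a field. First I would reduce to the case that $L/K$ is finite: since $X$ has finite rank, $\langle X\rangle_\otimes$ is generated by one object and $\pi(\langle X\rangle_\otimes,\omega_s)$ is of finite type, so that both the group and its base change are cut out by finitely much data; writing $L$ as the filtered union of its finitely generated sub-extensions of $K$ thus reduces us to $L/K$ finitely generated, and then — factoring off a purely transcendental (hence separable) part, for which the separable case applies — to $L/K$ finite. A finite extension factors as $L/F_s/K$ with $F_s/K$ separable and $L/F_s$ purely inseparable, and since base change composes it suffices to treat each factor separately. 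The separable factor is handled by the argument of Section~\ref{geom:pairs}. Finally, a finite purely inseparable extension is a tower of \emph{simple} purely inseparable extensions $K_i(\alpha)/K_i$, each of which is modular in the sense of Definition~\ref{modular}; composing base changes once more, everything comes down to a single modular purely inseparable step.

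The mechanism common to both factors is Katz's description of the monodromy group as a stabilizer. Writing $V=\omega_s(X)$, one has
\[\pi(\langle X\rangle_\otimes,\omega_s)=\bigcap_{M}\Stab_{\GL(V)}\big(\omega_s(M)\big),\]
the intersection running over all sub-$\D$-modules $M$ of all linear algebra constructions $\mathcal{C}(X)$. Because stabilizers and intersections commute with the flat base change $\GL(V)_L=\GL(V_L)$, the base-changed group $\pi(\langle X\rangle_\otimes,\omega_s)_L$ is the intersection of the $\Stab_{\GL(V_L)}\big(\omega_s(M)_L\big)$, whereas $\pi(\langle X_L\rangle_\otimes,\omega_{s,L})$ is the intersection over \emph{all} sub-$\D$-modules $N\subset\mathcal{C}(X_L)$ over $S_L$, an a priori smaller group. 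The whole content is that the new sub-objects $N$, which need not descend to $S$, impose no further condition. This is where descent enters: the group $\pi(\langle X\rangle_\otimes,\omega_s)_L$ is defined over $K$, hence stable under the relevant symmetry of $L/K$, and the identity $\Stab_{\GL(V_L)}(\alpha W)=\alpha\,\Stab_{\GL(V_L)}(W)$ then forces it to stabilize, together with $\omega_s(N)$, all of its conjugates $\alpha\big(\omega_s(N)\big)$; their common fixed data descends to a sub-object over $S$ already accounted for in the left-hand intersection.

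In the separable factor $\alpha$ ranges over $\Gal(L/K)$ and this is exactly Gabber's Galois descent. The inseparable factor is the heart of the argument, and here I would replace Galois descent by the modular descent of \cite{bat} (Theorem~\ref{ins}): the symmetry is now the action of $\HG(L/K)=\Aut(L[\bx]/K[\bx])$ on $X\otimes_L L[\bx]$, and the crucial point is that this action is again through \emph{automorphisms}, so that the stabilizer-conjugation identity above survives verbatim over the ring $L[\bx]=L[x]/x^{p^e}$. The price is that $L[\bx]$ is not a field but only an Artinian local $L$-algebra, so the main obstacle is to carry out the stabilizer bookkeeping over the non-reduced base $S_{L[\bx]}$: one must check that the sub-$\D$-modules over $S_L$ and over $S_{L[\bx]}$ behave coherently (flatness, and compatibility of $\omega_s$ across the nilpotent thickening), that the fixed objects of the $\HG(L/K)$-action are precisely those descending to $S_K$, and that stabilizers are preserved under the exactness available in this setting. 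Establishing these compatibilities is exactly the role of the finer axiomatization of Section~\ref{three} and of the verification that $\D$-modules over a smooth geometrically connected variety satisfy it; granting that, the descent closes the loop as in the separable case and yields $\pi(\langle X_L\rangle_\otimes,\omega_{s,L})\cong\pi(\langle X\rangle_\otimes,\omega_s)_L$.
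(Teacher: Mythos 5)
Your proposal does not prove the statement at hand. Theorem~\ref{tannakaduality} is Tannakian duality itself: the assertion that the functor $\aut(\omega)$ of tensor automorphisms is representable by an affine $K$-group scheme $\pi(\mathcal{T},\omega)$, and that $\omega$ induces an equivalence $\mathcal{T}\simeq\Repf_K\pi(\mathcal{T},\omega)$. What you prove instead is the paper's main base-change result (Theorem~\ref{perfect} and Corollary~\ref{strat}): that the formation of the monodromy group of a $\D$-module commutes with extension of the ground field, split into a separable factor handled by Gabber's Galois argument and a purely inseparable factor handled by modular descent over $L[\bx]$. That is a serviceable sketch of Sections~\ref{geom:pairs} and~\ref{three} of the paper, but it is aimed at the wrong target; indeed Theorem~\ref{tannakaduality} involves no field extension $L/K$ at all, and in the paper it is quoted from \cite[Thm.~2.11]{DM} as an input rather than proved.

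Moreover, as a purported proof of Theorem~\ref{tannakaduality} your argument is circular: its engine is Katz's description of the monodromy group as the stabilizer of the subspaces $\omega(Y)\subset\mathcal{C}(\omega(X))$ (Lemma~\ref{description}), and that description is itself a consequence of Tannaka duality --- one needs the equivalence $\langle X\rangle_\otimes\simeq\Repf_K\pi(X,\omega)$ already in hand to know that the subspaces stabilized by $\pi(X,\omega)$ are exactly those in the essential image of $\omega$. A genuine proof of the statement would have to construct the representing object: form the coalgebra $\varinjlim\End(\omega|_{\langle X\rangle_\otimes})^\vee$ as a filtered colimit over the finitely generated sub-Tannakian categories, use rigidity of $\mathcal{T}$ to equip it with an antipode making it a Hopf algebra, show that its spectrum represents $\aut(\omega)$, and verify that the comparison functor $F_\omega$ to finite-dimensional comodules (equivalently, representations) is an equivalence, exploiting exactness and faithfulness of $\omega$. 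None of these steps appears in your proposal, so nothing in it bears on the representability or the equivalence asserted in Theorem~\ref{tannakaduality}.
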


As remarked in \cite{Ka:ca}, if $X\in(\mathcal{T},\omega)$ and we consider the neutral Tannakian category $(\langle X\rangle_\otimes, \omega)$, where by abuse of notation we still denote by $\omega$ its restriction to $\langle X\rangle_\otimes$, then the $K$-algebraic group $\pi(X,\omega)=\pi(\langle X\rangle_\otimes,\omega)$, called the \emph{monodromy group of $X$}, admits a simpler description, thanks to the following two theorems:

\begin{theorem}[Chevalley] Let $K$ be a field and let $G$ an affine group scheme of finite type over $K$. Let $H$ be any subgroup of $G$, then there exists a representation $V$ of $G$ and a subspace $W\subset V$ such that $H$ is the stabilizer of $W$ in $V$.
\end{theorem}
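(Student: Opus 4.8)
The plan is to realize $H$ as the stabilizer of a subspace inside the coordinate ring of $G$ equipped with the regular representation. Write $G=\Spec(A)$, where $A=K[G]$ is a finitely generated Hopf algebra over $K$; since we work with affine group schemes of finite type, the subgroup $H$ is closed and corresponds to a Hopf ideal $I\subset A$, so that $H=\Spec(A/I)$. As $A$ is Noetherian, $I$ is generated by finitely many elements $f_1,\dots,f_n$. I would then let $G$ act on $A$ by the right regular representation, that is, by right translation on functions.

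First I would use that the regular representation is \emph{locally finite}: any finite collection of elements of $A$ is contained in a finite-dimensional $G$-stable subspace, which is the fact that a comodule over $A$ is the union of its finite-dimensional subcomodules. Applying this to $f_1,\dots,f_n$ yields a finite-dimensional representation $V\subset A$ of $G$ containing all the generators of $I$. Setting $W\df I\cap V$, the subspace $W$ still contains $f_1,\dots,f_n$ and therefore generates $I$ as an ideal of $A$.

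It then remains to identify $H$ with $\Stab_G(W)$, where $\Stab_G(W)$ is the closed subgroup functor sending a $K$-algebra $R$ to the set of $g\in G(R)$ with $g\cdot(W\otimes_K R)=W\otimes_K R$. The crucial observation is that for a fixed point $g\in G(R)$, right translation acts on $A\otimes_K R$ by an $R$-algebra automorphism. Hence if $g$ stabilizes $W$ it also stabilizes the ideal $I\otimes_K R$ generated by $W$; conversely, any $g$ stabilizing $I$ stabilizes $W=I\cap V$ because $V$ is $G$-stable. This reduces the statement to the equality $\Stab_G(I)=H$, which expresses exactly that a point of $G$ translates $I$ into itself if and only if it lies in $H$.

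The main obstacle is precisely this last equality. I expect to prove it Hopf-algebraically: unwinding the translation action through the comultiplication of $A$ shows that $g$ preserves $I$ exactly when $Hg\subseteq H$ as closed subschemes, and pairing with the counit, i.e. evaluating at the identity section, then forces $g\in H(R)$; the reverse inclusion is immediate. One also has to check that stabilizers of subspaces of finite-dimensional representations are representable by closed subgroups, which is standard. Combining these facts, the $V$ and $W$ constructed above satisfy $H=\Stab_G(W)$, as required.
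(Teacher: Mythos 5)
The paper states Chevalley's theorem without proof, as a classical result, and your argument is precisely the standard proof (as in Waterhouse, \emph{Introduction to Affine Group Schemes}, Thm.~16.1, or Deligne--Milne, Prop.~3.1): realize $H$ via its Hopf ideal $I\subset A=K[G]$, use local finiteness of the regular representation to find a finite-dimensional subcomodule $V$ containing generators of $I$, set $W=I\cap V$, and identify $\Stab_G(W)=\Stab_G(I)=H$ using that right translation by $g\in G(R)$ is an $R$-algebra automorphism of $A\otimes_KR$ together with the counit evaluation. Your proof is correct; the only fine points left implicit are routine ones --- flatness of $R$ over the field $K$, which gives $(I\cap V)\otimes_KR=(I\otimes_KR)\cap(V\otimes_KR)$ in the converse direction, and the Hopf-ideal property $\Delta(I)\subseteq I\otimes A+A\otimes I$, which is what makes the inclusion $H(R)\subseteq\Stab_G(I)(R)$ ``immediate.''
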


\begin{theorem}[{\cite[3.5, Thm.]{Wa}}]
Let $G$ be an algebraic group over a field $K$ and $V$ be a faithful representation of $G$, then every finite dimensional representation of $G$ is a sub-quotient of some linear construction $\mathcal{C}(V)$ with the natural induced $G$-action.
\end{theorem}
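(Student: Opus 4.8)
The plan is to reduce the general field extension $L/K$ to two cases that can be treated by descent-type arguments, following the axiomatic framework set up in the preceding sections. First I would observe that it suffices to prove the commutation for finitely generated extensions $L/K$, since the monodromy group of a single $\D$-module is of finite type and hence is already defined over a finitely generated subextension; a direct limit argument then recovers the general case. For a finitely generated extension I would factor it into a separable part followed by a purely inseparable part, so that it is enough to establish the two statements: the result for $L/K$ separable, and the result for $L/K$ finite and purely inseparable. The separable case is exactly Gabber's argument (as axiomatized in Section~\ref{geom:pairs}), where the monodromy group is realized via Katz's theorem as the stabilizer $\Stab_{\GL(V)}(W)$ of a subspace inside a linear algebra construction, and ordinary Galois descent together with the identity $\Stab_{\GL(V)}(\alpha(W))=\alpha(\Stab_{\GL(V)}(W))$ does the job.

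\textbf{The inseparable case.}
The heart of the matter is the purely inseparable case, which I would attack using the modular descent theory of \cite{bat} recalled in Theorem~\ref{ins}. After factoring further, one reduces to $L/K$ finite and \emph{modular} (see Definition~\ref{modular}), so that descent from $L$ to $K$ is governed by the action of the Hopf--Galois-type group $\HG(L/K)=\Aut(L[\bx]/K[\bx])$ on $X\otimes_L L[\bx]$, where $L[\bx]=L[x]/x^{p^e}$. The key structural point is that this action is through \emph{automorphisms} of the Artin algebra $L[\bx]$, which is precisely what makes Gabber's stabilizer trick available: for $W$ a submodule of a free $L[\bx]$-module $V$ and $\alpha\in\HG(L/K)$ one still has $\Stab_{\GL(V)}(\alpha(W))=\alpha(\Stab_{\GL(V)}(W))$, now over the base ring $L[\bx]$ rather than a field.

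\textbf{Verifying the axioms.}
To run the argument I would check that $\D\Mod(S/K)$ satisfies the finer axiomatization of Section~\ref{three}, which is the content I expect to require the most care. Concretely this means showing that Katz's description of the monodromy group as a stabilizer inside a linear algebra construction $\mathcal{C}(V)$ base-changes well along $S\mapsto S\otimes_K L[\bx]$, that the fibre functor $\omega_s$ is compatible with this base change (using $s\in S(K)$), and that the category of $\D$-modules of finite rank is stable under the relevant operations over the Artin base $L[\bx]$. The main obstacle is that $L[\bx]$ is not a field: one loses semisimplicity and the naive bijection between subspaces and their images under $\omega$, so I would need the good properties of the local Artin algebra $L[\bx]$ emphasized in the introduction, namely that free modules and their submodules behave well enough for Chevalley's stabilizer description and the identity above to survive. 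Once these compatibilities are in place, applying modular descent to the stabilizer presentation yields $\pi(X_L,\omega_{s_L})\cong \pi(X,\omega_s)_L$, which is exactly the assertion of the theorem.
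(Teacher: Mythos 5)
Your proposal does not prove the statement at hand. The statement is a classical fact about affine algebraic groups (Waterhouse 3.5): if $V$ is a \emph{faithful} representation of $G$, then every finite-dimensional representation is a subquotient of some linear algebra construction $\mathcal{C}(V)$ built from tensor products, direct sums and duals. What you have written instead is a strategy for the paper's \emph{main theorem} (the commutation of the monodromy group with ground field extension, Corollary~\ref{strat}), via separable/inseparable factorization, Gabber's stabilizer trick and modular descent over $L[\bx]$. That result lies strictly downstream of the statement you were asked to prove: the Waterhouse theorem, together with Chevalley's theorem, is precisely what yields Katz's description of $\pi(X,\omega)$ as a stabilizer inside linear algebra constructions (Lemma~\ref{description}), which your sketch itself invokes. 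As a proof of the stated theorem your text is therefore either circular or simply about a different assertion; none of its content (field extensions, $\HG(L/K)$, the Artin algebra $L[\bx]$) even appears in the statement, which concerns a single group over a single field.

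For the record, the actual argument is short. Faithfulness means $G\to\GL(V)$ is a closed immersion, so the Hopf algebra $K[G]$ is a quotient of $K[\GL(V)]=\mathrm{Sym}(V\otimes V^\vee)[1/\det]$ as a $G$-module under the regular action. Any finite-dimensional representation $W$ embeds $G$-equivariantly into a finite direct sum of copies of the regular representation $K[G]$ via its comodule map, hence is a subrepresentation of a quotient of a finite-dimensional subrepresentation of $K[\GL(V)]^{\oplus n}$. Every finite-dimensional subrepresentation of $K[\GL(V)]$ is contained in the image of some $\bigoplus_{n,m}\bigl(\mathrm{Sym}^{n}(V\otimes V^\vee)\otimes(\det)^{-m}\bigr)$, where $\mathrm{Sym}^{n}(V\otimes V^\vee)$ is a quotient of $(V\otimes V^\vee)^{\otimes n}$ and $(\det)^{-1}=\bigl(\Lambda^{\dim V}V\bigr)^\vee$ is a subquotient of a construction on $V$. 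Chaining these, $W$ is a subquotient of some $\mathcal{C}(V)$, as claimed. If you want to salvage your write-up, it should be resubmitted as a proof attempt for Theorem~\ref{perfect} or Corollary~\ref{strat}, where its outline (separable case by Galois descent, inseparable case by the $e$-modular axioms and the properties of $L[\bx]$ from Lemma~\ref{a}) does track the paper's actual strategy.
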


In particular, as remarked by Katz in \cite[III]{Ka:co}, if $G$ is a subgroup of $GL(V)$ for some $V$, then $G$ is uniquely determined by the list of subspaces $\mathcal{S}_G$ it stabilizes in all linear algebra construction $\mathcal{C}(V)$ over $V$.

Let $X\in\mathcal{T}$, then $\pi(X,\omega)$ is a subgroup of $\GL(\omega(X))$, and by Tannaka duality, it stabilizes all and only the sub-objects of $\mathcal{C}(\omega(X))=\omega (\mathcal{C}(X))$ of the form $\omega(Y)$ for some $Y\in\langle X\rangle_\otimes$, where $\mathcal{C}$ runs along all the possible linear algebra constructions over $\omega(X)$. In particular we have the following:
\begin{lemma}[{\cite[Sec.~III]{Ka:co}}]\label{description}
Let $X$ be an object in a neutral Tannakian category $(\mathcal{T}, \omega)$ over a field $K$. Then 
\[\pi(X,\omega)=\Stab_{\GL(\omega(X))}\{W\mid \exists\,\mathcal{C},\exists\,  Y\subset\mathcal{C}(X)\text{ such that } W=\omega(Y)\},\]
where $\mathcal{C}$ runs along all possible linear algebra constructions.
\end{lemma}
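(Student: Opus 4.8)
The plan is to prove the two inclusions separately, using Tannaka duality (Theorem~\ref{tannakaduality}) to pass between subobjects of linear algebra constructions on $X$ and subrepresentations of $\mathcal{C}(\omega(X))$, together with the theorems of Chevalley and Waterhouse to show that a subgroup of $\GL(\omega(X))$ is cut out by the subspaces it stabilizes. Write $V=\omega(X)$, $G=\pi(X,\omega)$, and let $\mathcal{S}=\{W\mid\exists\,\mathcal{C},\exists\,Y\subset\mathcal{C}(X)\text{ with }W=\omega(Y)\}$. Since $X$ generates $\langle X\rangle_\otimes$ and $F_\omega$ is an equivalence onto $\Repf_K G$, the object $X$ corresponds to a faithful $G$-representation, so $G\subset\GL(V)$ and $G$ acts on each $\mathcal{C}(V)=\omega(\mathcal{C}(X))$ through the natural construction induced by $\GL(V)$.

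For the inclusion $G\subseteq\Stab_{\GL(V)}(\mathcal{S})$ I would argue by naturality. An element $g\in G=\aut(\omega)(K)$ is a tensor-compatible natural automorphism of $\omega$, so its value $g_{\mathcal{C}(X)}$ on any construction equals $\mathcal{C}(g_X)$, the action of $g_X\in\GL(V)$ on $\mathcal{C}(V)$. Given a subobject $\iota\colon Y\hookrightarrow\mathcal{C}(X)$ in $\langle X\rangle_\otimes$, naturality of $g$ applied to $\iota$ gives $g_{\mathcal{C}(X)}\circ\omega(\iota)=\omega(\iota)\circ g_Y$; since $\omega$ is exact and faithful, $\omega(\iota)$ is a monomorphism with image $W=\omega(Y)$, and comparing images shows $g_X$ carries $W$ onto itself. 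Hence $g_X$ stabilizes every $W\in\mathcal{S}$.

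For the reverse inclusion $\Stab_{\GL(V)}(\mathcal{S})\subseteq G$ I would invoke Chevalley's theorem to produce a representation $\rho\colon\GL(V)\to\GL(M)$ and a subspace $W_0\subset M$ with $G=\Stab_{\GL(V)}(W_0)$. Applying Waterhouse's theorem to the ambient group $\GL(V)$ and its standard faithful representation $V$, the representation $M$ is a subquotient of some $\mathcal{C}(V)$, say $M=A/B$ with $B\subseteq A\subseteq\mathcal{C}(V)$ both $\GL(V)$-stable; let $A'\subseteq A$ be the preimage of $W_0$. As $W_0$ is $G$-stable, the three subspaces $A,B,A'$ of $\mathcal{C}(V)=\omega(\mathcal{C}(X))$ are all $G$-stable. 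The crucial point — which I regard as the main obstacle — is the identification of $G$-stable subspaces of $\omega(\mathcal{C}(X))$ with subobjects of $\mathcal{C}(X)$: because $F_\omega$ is an exact, fully faithful tensor functor, the $G$-invariant subspaces of $\omega(\mathcal{C}(X))$ are precisely the images under $\omega$ of the subobjects of $\mathcal{C}(X)$, so $A,B,A'\in\mathcal{S}$. Consequently any $g\in\Stab_{\GL(V)}(\mathcal{S})$ preserves $A$, $B$ and $A'$, so the action it induces on the quotient $M=A/B$ — which coincides with $\rho(g)$ — preserves $A'/B=W_0$; thus $g\in\Stab_{\GL(V)}(W_0)=G$.

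Combining the two inclusions yields the equality. The routine points I would not belabor are the compatibility $\omega(\mathcal{C}(X))=\mathcal{C}(\omega(X))$ and the exactness of $F_\omega$ (which makes the subobject–subspace correspondence in the third paragraph a bijection); the genuinely load-bearing step is precisely that correspondence, since it is what lets the abstract subspace $W_0$ furnished by Chevalley be reconstructed from elements of $\mathcal{S}$.
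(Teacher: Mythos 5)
Your proof is correct and is essentially the paper's own (sketched) argument made explicit: Tannaka duality identifies the family $\mathcal{S}$ with the $\pi(X,\omega)$-subrepresentations of the constructions $\mathcal{C}(\omega(X))$, and the Chevalley--Waterhouse combination, applied to the ambient group $\GL(\omega(X))$ with $W_0$ lifted to its preimage $A'$ in the subquotient presentation, pins down the stabilizer exactly as in the paper's citation of Katz. The only point to polish is that your first inclusion is argued at $K$-points ($g\in\aut(\omega)(K)$), whereas the asserted equality of group schemes requires the same naturality argument for $\aut(\omega)(R)$ over every $K$-algebra $R$; it goes through verbatim (or follows immediately from the fact that each $\omega(Y)$ is a subrepresentation of the $\pi(X,\omega)$-representation $\omega(\mathcal{C}(X))$).
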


\section{Galois Tannakian pairs and the base change of the monodromy group}\label{geom:pairs}

Keeping the notations of the previous section, if $(E,\nabla)\in\D\Mod(S/K)$ and $L/K$ is a field extension, we can consider the vector bundle with flat connection $(E_L,\nabla_L)\in \D\Mod(S_L/L)$. If we denote again by $\omega_s$ the fibre functor given by $s\in S(K)\subset S(L)=S_L(L)$, from the definition of $\pi((E_L,\nabla_L),\omega_s)$ it follows that there is an inclusion of $L$-group schemes
\[\mu:\pi((E_L,\nabla_L), \omega_s)\subset\pi((E,\nabla),\omega_s)_L.\]
It is natural to ask whether $\mu$ is an isomorphism, and it was proven by Gabber (see \cite[Prop.~1.3.2]{Ka:ca})  that this is indeed the case. We will recall his proof in the following more generic setting:

\begin{definition}
 Let $K$ be any field and $i:K\subset L$ a field extension. A pair consisting of a neutral Tannakian category $(\mathcal{T},\omega)$ over $K$ and a neutral Tannakian category  $(\mathcal{T}_L,\omega_L)$ over $L$ is a \emph{geometric $(F,L)$-pair} if there exists an additive, exact tensor functor $\centerdot_L:\mathcal{T}\to\mathcal{T}_L$, $A\mapsto A_L$ called the \emph{base change} functor such that
\begin{itemize}
\item[(A1)] one has that  $\omega_L\circ \centerdot_L= i^*\circ\omega$.
\end{itemize}
A geometric $(K,L)$-pair is said to be \emph{Galois} if
\begin{itemize}
\item[(B1)] $K$ is the field of fixed elements of $G=\Aut(L/K)$;
\item[(B2)] for every $X\in\mathcal{T}$, $Y\subset X_L$ and $\sigma\in G$ there exists $Y'\subset X_L$ such that $\omega_L(Y')=\sigma (\omega_L(Y))$ where $G$ acts naturally on $\omega_L(X_L)=\omega(X)\otimes_KL$.
\item[(B3)] let $X\in\mathcal{T}$, then if $Y\subset X_L\in\mathcal{T}_L$ and $V\subset \omega(X)\in \Vtf_K$ are such that $\omega_L(Y\subset X_L)=(V\subset \omega(X))_L$,  there exists $Z\subset X\in\mathcal{T}$ such that $(Z\subset X)_L=Y\subset X_L$.
\end{itemize}
\end{definition}

\begin{example}\label{ex1} Let $L/K$ satisfying $(B1)$, then the following are examples of Galois $(K,L)$-geometric pairs, where the axiom $(B2)$ is given by the existence of a Galois action on $\mathcal{T}_L$ commuting with the fibre functor $\omega_L$:
\begin{itemize}
\item[i)] Consider the category $\D\Mod(S/K)$ of $\D$-modules of finite rank $(E,\nabla)$ on a smooth geometrically connected variety $S$ over $K$ such that $S(K)\neq\emptyset$, together with the functor $\omega_s(E,\nabla)=E_s$ (see \cite[Chap.~VI,1.2]{Sa:ct} together with \cite[§16]{SGA4}). Then $(\D\Mod(S/K),\omega_s)$ and $(\D\Mod(S_L/L),\omega_s)$, where $s$ is considered in $S_L(L)=S(L)$ in the second case, is a Galois pair.	
\item[ii)] More in general, for any $S$ locally of finite type over $K$ with $S(K)\neq 0$ if one can consider the category of stratified bundles (see again \cite[Chap.~VI,1.2]{Sa:ct})) over $S$ and $S_L$ and the fibre functor as in the previous example, they form a Galois pair.
\item[iii)] The category $\EFin S$ of essentially finite sheaves over a pseudo-proper geometrically connected and geometrically integral $F$-variety $S$ (see \cite[Def.~7.7 and 7.1,Prop.~5.5]{vist}) such that $S(K)\neq \emptyset$ together with $\EFin S_L$ and fibre functors as in $(ii)$.
\item[iv)] Variations of the previous examples, such as the subcategories of unipotent objects, finite tame objects (see \cite[Def.~12.1]{vist}) or essentially finite objects (see \cite[Def.~7.7,Cor~7.10]{vist}), the largest semi-simple sub-category, and so on.
\end{itemize}
The axiom $(B3)$ follows by (a generalization of) an argument of Gabber in \cite[Prop.~1.3.2]{Ka:ca}. Namely, if $Y\subset X_L$ and if 
\[\omega_L(Y\subset X_L)=(V\subset \omega(X))_L\]
for some $V\subset \omega(X)\in \Vtf_K$, then $Y$ is invariant under the natural action of $\Gal(L/K)$ on $X_L$: as $\omega_L$ is exact, it preserves pullbacks. Moreover as the Galois action commutes with $\omega_L$ and $V_L$ is $\Gal(L/K)$-invariant,
\[\omega_L(Y\times_{X_L} \sigma(Y))=V_L\times_{\omega(X)_L}\sigma(V_L)=V_L\cap\sigma(V_L)=V_L,\]
and as $\omega_L$ is faithful and $\omega_L(\coker (\sigma(Y)\times_{X_L} Y\to Y))$ is zero, it follows that $\coker (\sigma(Y)\times_{X_L} Y\to Y)$ is also zero and hence
\[\sigma(Y)\times_{X_L} Y\simeq Y\]
as sub-objects of $X_L$. The same holds for $\sigma(Y)$ and hence $Y\simeq\sigma(Y)$ as sub-objects of $X_L$, that $Y$ is $\Gal(L/K)$-invariant. 
In particular, as Galois descent holds on all categories of Example~\ref{ex1}, there exists $Z\subset X\in\mathcal{T}$ such that $Y\subset X_L=(Z\subset X)_L$.
\end{example}

Then the following holds:
\begin{theorem}[{\cite[Prop.~1.3.2]{Ka:ca}}]\label{perfect}
Let $(\mathcal{T},\omega)$ and $(\mathcal{T}_L,\omega_L)$ be a Galois geometric $(K,L)$-pair and let $X\in\ob\mathcal{T}$. Then there is a functorial isomorphism 
\[\mu:\pi((X_L,\omega_L))\to\pi(X)_L.\]
\end{theorem}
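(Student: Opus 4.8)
The plan is to run Gabber's stabilizer argument with Galois descent applied to the \emph{groups} rather than to the individual subspaces. Write $V=\omega(X)$, so that by $(A1)$ one has $\omega_L(X_L)=V\otimes_K L=:V_L$ and all three groups $\pi(X_L,\omega_L)$, $\pi(X)_L$ sit inside $\GL(V_L)=\GL(V)_L$. Since $\centerdot_L$ is an exact tensor functor it commutes with every linear algebra construction, so $\mathcal{C}(X_L)=\mathcal{C}(X)_L$ and, again by $(A1)$, $\omega_L(\mathcal{C}(X_L))=\mathcal{C}(V)_L$. By Lemma~\ref{description} I would record the three descriptions $\pi(X)=\Stab_{\GL(V)}(\mathcal{S})$, $\pi(X)_L=\Stab_{\GL(V_L)}(\mathcal{S}_L)$ and $\pi(X_L,\omega_L)=\Stab_{\GL(V_L)}(\mathcal{S}')$, where $\mathcal{S}=\{\omega(Y)\mid Y\subset\mathcal{C}(X)\text{ in }\mathcal{T}\}$, $\mathcal{S}_L=\{W\otimes_K L\mid W\in\mathcal{S}\}$, and $\mathcal{S}'=\{\omega_L(Y')\mid Y'\subset\mathcal{C}(X_L)\text{ in }\mathcal{T}_L\}$. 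Base-changing $Y\mapsto Y_L$ shows $\mathcal{S}_L\subseteq\mathcal{S}'$, which recovers the closed immersion $\mu\colon\pi(X_L,\omega_L)=\Stab(\mathcal{S}')\subseteq\Stab(\mathcal{S}_L)=\pi(X)_L$; the whole point is the reverse inclusion.

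The key device is that both groups are invariant under $G=\Aut(L/K)$, acting semilinearly on $V_L$ and hence on $\GL(V)_L$ through its $K$-structure. Here I invoke the stabilizer trick from the introduction in its scheme-theoretic form, namely $\Stab_{\GL(V_L)}(\sigma W)=\sigma\bigl(\Stab_{\GL(V_L)}(W)\bigr)$ for every $L$-subspace $W$ and every $\sigma\in G$. Each $W\otimes_K L\in\mathcal{S}_L$ is $G$-fixed, so $\pi(X)_L=\Stab(\mathcal{S}_L)$ is $G$-invariant; and by $(B2)$ the action of $G$ permutes the collection $\mathcal{S}'$, whence $\pi(X_L,\omega_L)=\Stab(\mathcal{S}')$ is $G$-invariant as well. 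This is the step where the \emph{a priori} purely $L$-rational group $\pi(X_L,\omega_L)$ acquires a $K$-rational shadow.

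Now I would descend. Using $(B1)$, i.e.\ $K=L^{G}$, Galois descent identifies the $G$-invariant closed $L$-subgroups of $\GL(V)_L$ with the closed $K$-subgroups of $\GL(V)$; for this one may reduce to finite-dimensional subspaces, each defined over a finite subextension, so the descent is the classical one. Thus $\pi(X)_L$ descends to $\pi(X)$, while $\pi(X_L,\omega_L)$ descends to a closed $K$-subgroup $H_0\subseteq\pi(X)$ with $(H_0)_L=\pi(X_L,\omega_L)$. To finish I must prove the reverse inclusion $\pi(X)\subseteq H_0$. Let $W\subset\mathcal{C}(V)$ be any $K$-subspace stabilized by $H_0$; then $(H_0)_L=\pi(X_L,\omega_L)$ stabilizes $W\otimes_K L$, and by Lemma~\ref{description} the subspaces stabilized by the Tannakian group $\pi(X_L,\omega_L)$ are \emph{exactly} those in $\mathcal{S}'$, so $W\otimes_K L=\omega_L(Y')$ for some $Y'\subset\mathcal{C}(X_L)$. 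Since $W\otimes_K L$ is $K$-rational, axiom $(B3)$ (applied to the construction $\mathcal{C}(X)$ in place of $X$) produces $Z\subset\mathcal{C}(X)$ in $\mathcal{T}$ with $\omega(Z)=W$, i.e.\ $W\in\mathcal{S}$. Hence every subspace stabilized by $H_0$ is stabilized by $\pi(X)=\Stab(\mathcal{S})$, giving $\pi(X)\subseteq H_0$ and therefore $H_0=\pi(X)$. Base-changing yields $\pi(X_L,\omega_L)=(H_0)_L=\pi(X)_L$, and functoriality of $\mu$ is clear from the construction.

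I expect the genuine obstacle to be the middle paragraph: arranging the $G$-invariance of $\pi(X_L,\omega_L)$ so that it descends at all. This rests entirely on combining $(B2)$ with the stabilizer trick, and it is what lets me pass from an $L$-group to the $K$-group $H_0$; the final comparison then closes cleanly because $(B3)$ is precisely the statement that a $K$-rational subspace realized by a sub-object over $L$ is already realized by a sub-object over $K$. Two points need care and I would isolate them as small lemmas: first, that the subspaces stabilized by $\pi(X_L,\omega_L)$ are exactly $\mathcal{S}'$ (Tannaka duality, Lemma~\ref{description}); and second, that Galois descent of closed subgroups is available in the present Galois setting, which is the reason the inseparable case must be treated by the different method of the later sections.
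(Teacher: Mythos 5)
Your proposal is correct and follows essentially the same route as the paper's proof: the stabilizer description of Lemma~\ref{description}, $G$-invariance of the family of realized subspaces via $(B2)$ and the stabilizer trick, Galois descent of $\pi(X_L,\omega_L)$ to a $K$-form using $(B1)$, and the reverse inclusion via Tannaka duality plus $(B3)$. The only cosmetic difference is that where the paper invokes Chevalley's theorem to reduce to a single subspace $W$ with $G=\Stab(W)$, you quantify over all subspaces stabilized by $H_0$ --- note that your inference ``every subspace stabilized by $H_0$ is stabilized by $\pi(X)$, hence $\pi(X)\subseteq H_0$'' still rests on exactly that Chevalley/Katz fact (a closed subgroup of $\GL(V)$ is the stabilizer of its stabilized subspaces in the constructions $\mathcal{C}(V)$), so the two arguments are logically identical.
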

\begin{proof}
Let us denote $V_L=\omega_L(X_L)=\omega(X)_L$, by Lemma~\ref{description} one has that $\pi(X_L,\omega_L)\subset GL(V_L)$ is exactly the stabilizer of all $W\subset \mathcal{C}(V_L)$ such that $W=\omega_L(Y)$ for some $Y\subset \mathcal{C}(X_L)$, where $\mathcal{C}$ is some linear algebra construction. Note that $\mathcal{C}(X_L)=\mathcal{C}(X)\otimes_KL$, hence there is a natural action of $\Gal(L/K)$ over $\mathcal{C}(X_L)$ given by $\sigma\mapsto id\otimes \sigma$. Let now $Y\in \langle X_L\rangle_\otimes$ be a sub-object of $\mathcal{C}(X_L)$, then $\sigma(\omega(Y))$ is again in the strict image of $\omega$ by $(B2)$.
In particular, the set of subspaces $W$ of $\mathcal{C}(\omega_L(X_L))$ such that $W=\omega_L(Y)$ for some $Y\in\langle X_L\rangle_\otimes$ is invariant under the action of $\Gal(L/K)$, hence so are the equations defining $\pi(X_L,\omega_L)$ in $GL(V_L)$. 

By Galois descent, it follows that $\pi(X_L,\omega_L)$ is defined over $K$, that is, that there exists $G\subset GL(\omega(X))$ such that $\pi(X_L,\omega_L)=G\otimes_K L$. As $\mu$ is a closed immersion, it suffices to prove that $\pi(X,\omega)\subseteq G$.
By Chevalley theorem, there exists $\mathcal{C}_G$ a linear algebra construction and $W\subset \mathcal{C}_G(\omega(X))$ such that $G=\Stab(W)$. In order to prove that $\pi(X,\omega)\subseteq G$, it is hence enough to prove that $\pi(X,\omega)$ stabilizes $W$.

As $\pi(X_L,\omega_L)=G\otimes_K L$, we know that $W_L$ is stabilized by $\pi(X_L,\omega_L)$, in particular there exists $Y\subset \mathcal{C}_G(X_L)$ such that $W_L=\omega_L(Y)$. But then by axiom $(B3)$ there exists $Z\in\mathcal{T}$ such that $Y=Z_L$, in particular $\pi(X,\omega)$ stabilizes $W=\omega(Z)$ and this completes the proof.
\end{proof}

\section{Modular field extensions and modular pairs}\label{three}

The goal of this section is to prove a result equivalent to Theorem~\ref{perfect} for any field extension $L/K$, also in case the axiom $(B1)$ is not satisfied. The issue here is of course that Galois descent does not work in this situation. There is though a way to bypass this problem, using the theory of descent along modular field extensions of finite exponent.

\begin{definition}[{ see \cite[Thm.~1]{Swee} and \cite[Def~1.1]{devmo}}]\label{modular}
An algebraic extension $L$ of $K$, is said to be of \emph{finite exponent} if there is a positive natural number $e$ such that $L^{p^e}$ is separable over $K$, where $p$ is the characteristic of $K$. The minimal of such $e$ is called the \emph{exponent} of $L$ over $K$.

An algebraic extension $L$ of $K$ is \emph{modular} if it is isomorphic to a (possibly infinite) tensor product over $K$ of elementary extensions (that is, of extensions of $K$ generated by one element).
\end{definition}

In case $L/K$ is modular of finite exponent $e$ we define (see \cite[Sec.~1]{bat} and \cite{Hee}) the \emph{Heerma--Galois group of $L$ over $K$} to be 
\[\HG(L/K)=\Aut(L[\bx]/K[\bx])\]
where $K[\bx]=K[x]/x^{p^e}$, and similarly for $L[\bx]$. Then we have the following:

\begin{proposition}[{\cite[Thm.~2.11]{bat}}]\label{ins}
Let $L/K$ be a finite normal modular extension of exponent less or equal than $e$ and let us denote $K[\bx]=K[x]/(x^{p^e})$.  Let $V$ be a $K$-vector space and $W$ a sub-$K$-vector space of of $V\otimes_KL$. Then the following are equivalent:
\begin{itemize}
\item[i)] there exists $W_0\subset V$ over $K$ such that $W=W_0\otimes_KL\subset V\otimes_KL$;
\item[ii)] $W\otimes_LL[\bx]$ is invariant under the natural action of $\HG(L/K)$ on $V\otimes_KK[\bx]$.
\end{itemize}
\end{proposition}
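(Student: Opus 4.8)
The plan is to prove the two implications separately, writing $R=K[\bx]$, $S=L[\bx]=R\otimes_KL$ and $G=\HG(L/K)=\Aut(S/R)$, and viewing $W\otimes_LS$ as an $S$-submodule of the ambient module $V\otimes_KS=(V\otimes_KL)\otimes_LS$, on which $G$ acts through its action on the factor $S$ (the inclusion $W\otimes_LS\subseteq(V\otimes_KL)\otimes_LS$ being legitimate since $S$ is free, hence flat, over $L$). The direction (i)$\Rightarrow$(ii) is immediate: if $W=W_0\otimes_KL$ with $W_0\subseteq V$ a $K$-subspace, then $W\otimes_LS=W_0\otimes_KS=(W_0\otimes_KR)\otimes_RS$, and for $g\in G$ one has $g(w_0\otimes s)=w_0\otimes g(s)\in W_0\otimes_KS$, so $W\otimes_LS$ is $G$-stable.

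The content is (ii)$\Rightarrow$(i), and I would organize it around two inputs from the inseparable descent theory of \cite{bat} and \cite{Hee}. The first (the analogue of axiom $(B1)$) is that $S/R$ is finite free, hence faithfully flat, and that its ring of $G$-invariants is exactly $R$; since $V$ is free over $K$, taking $G$-invariants commutes with $V\otimes_K-$, so $(V\otimes_KS)^G=V\otimes_KR$. The second input, which is the heart of the matter, is that $S/R$ is \emph{Galois} under $\HG(L/K)$, so that $\HG$-descent of submodules holds: a $G$-stable $S$-submodule $M$ of an extended module $N_0\otimes_RS$ is recovered as $M=M^G\otimes_RS$ with $M^G$ an $R$-submodule of $N_0$.

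Granting these, set $M=W\otimes_LS$; by hypothesis it is $G$-stable, and it sits inside the extended module $V\otimes_KS=(V\otimes_KR)\otimes_RS$. By $\HG$-descent, $M=M_0\otimes_RS$ where $M_0:=M^G\subseteq(V\otimes_KS)^G=V\otimes_KR$. It then remains to pass from the $R$-submodule $M_0$ to an honest $K$-subspace of $V$, which I would do by reducing modulo the maximal ideal $(x)$. On one hand, $M/xM=(W\otimes_LS)/x(W\otimes_LS)=W\otimes_L(S/xS)=W$, since $S/xS=L$; on the other hand, using $M=M_0\otimes_RS$, one gets $M/xM=M_0\otimes_R(S/xS)=M_0\otimes_RL=(M_0/xM_0)\otimes_KL$. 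Setting $W_0:=M_0/xM_0\subseteq V\otimes_K(R/xR)=V$, a $K$-subspace, and comparing the two computations yields $W=W_0\otimes_KL$, which is exactly (i).

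The main obstacle is the second input: establishing that $\HG(L/K)$-invariance genuinely suffices for descent when $L/K$ is inseparable. Over a field this would be classical Galois descent, but for purely inseparable $L/K$ there are no nontrivial $K$-automorphisms at all, so the ordinary argument collapses. The whole idea, as developed in \cite{bat}, is that adjoining the nilpotent $x$ manufactures the automorphism group scheme $\HG(L/K)=\Aut(L[\bx]/K[\bx])$ and turns $L[\bx]/K[\bx]$ into a genuinely Galois extension, for which faithfully flat descent applies with the descent datum supplied by the $\HG$-action. Verifying this Galois (torsor) property — together with the fixed-ring computation $S^G=R$ — is the technical core one must import, and every other step above is formal bookkeeping around it.
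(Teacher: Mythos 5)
The paper itself does not prove this proposition: it is imported verbatim from \cite[Thm.~2.11]{bat}, so the relevant comparison is with the descent theory developed there. Your direction (i)$\Rightarrow$(ii) is fine, but both of the inputs on which you hang (ii)$\Rightarrow$(i) are false. Writing $R=K[\bx]$, $S=L[\bx]$, $G=\HG(L/K)$, it is \emph{not} true that $S^G=R$, nor that $S/R$ is a Galois (torsor) extension under $G$. Already in the simplest modular case $L=K(a)$, $a^p=t\in K$, $e=1$: any $\sigma\in G$ must send $a\mapsto a+\lambda$ with $\lambda\in xS$ (since $(a+\lambda)^p=t$ forces $\lambda^p=0$), hence every $\sigma$ fixes $ax^{p-1}\notin R$, so $S^G\supsetneq R$. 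For the same reason the submodule $x^{p-1}S\subseteq S$ is $G$-stable with $(x^{p-1}S)^G=x^{p-1}S$, yet it is not extended from an $R$-submodule in the way your descent recipe $M=M^G\otimes_RS$ would require; and $G$ is infinite (a vector group over $L$) while $S$ has finite rank $[L:K]$ over $R$, which already rules out any Chase--Harrison--Rosenberg-type Galois structure. The true fixed-point statement in this theory (Heerema) is only that $\{\ell\in L\mid \sigma(\ell)=\ell\ \forall\sigma\in G\}=K$, i.e.\ the analogue of $(B1)$ for elements of $L\subset S$, not a computation of the full invariant ring, and it cannot be upgraded to faithfully flat descent along $S/R$ with descent data supplied by the $G$-action.

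A structural symptom of the problem is that your argument never uses modularity (nor normality) of $L/K$, whereas by Sweedler's characterization the implication (ii)$\Rightarrow$(i) genuinely fails for non-modular extensions, because $\HG(L/K)$ is then too small; so no proof along purely formal torsor-descent lines can be complete. The actual mechanism behind \cite[Thm.~2.11]{bat} is different: expanding $\sigma(w\otimes 1)$ in powers of $x$ identifies $\HG$-invariance of $W\otimes_LL[\bx]$ with stability of $W$ under the truncated Hasse--Schmidt higher derivations of $L/K$ acting on $V\otimes_KL$, and one then runs a Jacobson--Sweedler-type argument on elements of minimal length --- this is exactly where modularity (a tensor decomposition into primitive extensions, giving enough automorphisms of $L[\bx]/K[\bx]$) enters. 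Separately, even granting a descended $R$-submodule $M_0\subseteq V\otimes_KR$, your final step $W_0:=M_0/xM_0\subseteq V$ is unjustified: reduction mod $x$ of a submodule need not inject into the reduction of the ambient module (e.g.\ $xR\subset R$ gives $xR/x^2R\neq 0$ mapping to $0$ in $R/xR$); this would be repairable by showing $\coker(M_0\hookrightarrow V\otimes_KR)$ is flat, but it is a further gap you do not address. As it stands the proposal black-boxes the hard direction behind a false lemma.
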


The previous theorem is not enough to generalize Gabber's proof, as for $L[\bx]$-linear categories Tannaka duality requires some additional conditions to be fulfilled:

\begin{theorem}[{\cite[Lemma~8.1.2]{And}\footnote{This theorem is a more manageable version of \cite[II, Prop.~3.1.4.3]{Sa:ct} and a more explicit version than \cite[Thm.~1.3.4]{scha}.}}]\label{ta:gen}
Let $A$ be a commutative ring and $\mathcal{T}_A$ a $A$-linear category, let $\omega_A:\mathcal{T}_A\to \Modf_{A}$ a tensor functor which is exact and faithful. Suppose that there is a tensor subcategory $\mathcal{T}_A'$ such that $\mathcal{T}_A'$ is rigid \footnote{ We refer to \cite{Sa:ct} for the notion of rigid category and  rigid functor.}, $\omega_A$ restricted to $\mathcal{T}_A'$ is a rigid functor with image in $\Bun_{A}$ and that $\mathcal{T}'_A$ \emph{spans} $\mathcal{T}_A$, that is every object of $\mathcal{T}_A$ is a quotient of some object in $\mathcal{T}_A'$. Then there exists a \emph{flat} $A$-group scheme $G$ representing $\aut(\omega_A)$, moreover $\omega_A$ induces an equivalence of categories making the following diagram commute:
\[\xymatrix{\mathcal{T}_A\ar[rd]_{\omega_A}\ar[rr]^{\simeq} && \Repf_{A}(G)\ar[dl]^{\fr}\\
& \Modf_{A}
}.\]
\end{theorem}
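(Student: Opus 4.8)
The plan is to run the classical Tannakian reconstruction of Deligne--Milne over the base ring $A$, isolating the two places where the passage from a field to a ring causes trouble: the existence of the representing algebra and its flatness. First I would produce the representing object as a coend. Since $\mathcal{T}'_A$ is rigid and $\omega_A$ carries it into $\Bun_A$, every $\omega_A(X)$ with $X\in\mathcal{T}'_A$ is finite locally free and hence dualizable, so $\omega_A(X)^\vee\otimes_A\omega_A(X)$ is again a vector bundle and the coend
\[
B=\int^{X\in\mathcal{T}'_A}\omega_A(X)^\vee\otimes_A\omega_A(X)
\]
exists. I would equip $B$ with its canonical coalgebra structure (comultiplication from the coevaluations, counit from the evaluations) and with the commutative multiplication induced by the tensor product on $\mathcal{T}'_A$, making $B$ a commutative bialgebra; the duals available in $\mathcal{T}'_A$ supply the antipode, so that $G=\Spec B$ is an affine $A$-group scheme. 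That $G$ represents $\aut(\omega_A)$ is then a formal consequence of the universal property of the coend: an $A'$-valued point is exactly a family of $A'$-linear automorphisms of the $\omega_A(X)\otimes_AA'$ that is natural in $X$ and compatible with tensor products, which is precisely an element of $\aut(\omega_A)(A')$. Because $\mathcal{T}'_A$ spans $\mathcal{T}_A$ and $\omega_A$ is exact, restricting an automorphism of $\omega_A$ to $\mathcal{T}'_A$ loses no information, so the same $G$ represents the automorphisms of the full $\omega_A$.

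The flatness of $B$ is the genuinely new ingredient, trivial over a field where every module is flat. I would present $B$ as the filtered union of its finitely generated sub-bialgebras: writing $B_X$ for the sub-bialgebra spanned by the matrix coefficients of the comodule $\omega_A(X)$ --- equivalently the coordinate ring of the scheme-theoretic image $G_X$ of $G$ in $\GL(\omega_A(X))$ --- one has $B=\varinjlim_X B_X$, the system being filtered because $\mathcal{T}'_A$ is closed under direct sums (so $B_X,B_Y\subseteq B_{X\oplus Y}$). As filtered colimits preserve flatness, it suffices to show each $B_X$ is flat; since $B_X$ is a quotient of the vector bundle $\omega_A(X)^\vee\otimes_A\omega_A(X)$, the task is to verify that this particular quotient stays flat, which is exactly where the hypothesis that $\omega_A$ takes values in $\Bun_A$ (and not merely in $\Modf_A$) becomes indispensable.

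It remains to upgrade $\omega_A$ to an equivalence $F_{\omega_A}\colon\mathcal{T}_A\to\Repf_A(G)=\Comod_B$. The coend is by construction the universal coaction on the objects $\omega_A(X)$, so $F_{\omega_A}$ is well defined, and it is faithful because $\omega_A$ is. For full faithfulness I would first treat $\mathcal{T}'_A$, where rigidity lets one identify $\Hom_{\mathcal{T}'_A}(M,N)$ with the $G$-invariants of $\Hom_A(\omega_A(M),\omega_A(N))$, i.e.\ with the comodule morphisms; the general case then follows by presenting arbitrary objects of $\mathcal{T}_A$ as cokernels of morphisms in $\mathcal{T}'_A$ (possible since $\mathcal{T}'_A$ spans), applying the exact functor $\omega_A$, and chasing the resulting diagram. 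Essential surjectivity is dual: every finite $B$-comodule is a cokernel of a morphism between comodules of the form $\omega_A(X)$ with $X\in\mathcal{T}'_A$, and I would reconstruct the corresponding object of $\mathcal{T}_A$ as the cokernel in $\mathcal{T}_A$ that $\omega_A$ carries to it, using exactness and faithfulness to check it maps to the prescribed comodule.

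I expect flatness to be the main obstacle. Over a field it is automatic and the entire difficulty disappears, whereas over a ring one must actually produce the filtered presentation of $B$ by the sub-bialgebras $B_X$ and verify that each remains flat --- a step with no analogue in the classical argument, and the reason the rigid spanning subcategory is required to take values in $\Bun_A$ rather than in $\Modf_A$. The full faithfulness and essential surjectivity, by contrast, follow the field-case template once the spanning hypothesis and exactness of $\omega_A$ are used to reduce every object to a cokernel of objects living in the well-behaved rigid part.
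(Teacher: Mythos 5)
Note first that the paper does not prove this statement at all: it is quoted verbatim from the literature (\cite[Lemma~8.1.2]{And}, with the variants in Saavedra and Sch\"appi cited alongside), so your attempt can only be measured against those references. Your skeleton is the right one --- coend $B=\int^{X}\omega_A(X)^\vee\otimes_A\omega_A(X)$ over the rigid part, bialgebra structure, $G=\Spec B$ representing $\aut(\omega_A)$, spanning plus exactness to pass from $\mathcal{T}'_A$ to $\mathcal{T}_A$, and the comodule-theoretic equivalence. But at the step you yourself single out as the whole content of the theorem over a ring, the argument is not merely unfinished, it is structured so that it cannot be finished. You present $B$ as the filtered \emph{union} of the sub-bialgebras $B_X$ of matrix coefficients and reduce to showing each $B_X$ is flat. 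Each $B_X$ is by construction the \emph{image} of the vector bundle $\omega_A(X)^\vee\otimes_A\omega_A(X)$ in $B$, i.e.\ a quotient, and quotients of finite locally free modules are in general not flat. This is fatal precisely over the base ring this paper needs: for $A=L[\bx]=L[x]/(x^{p^e})$, a finitely generated flat module is free, while images of maps of free modules are typically not free (already the ideal $(x)\subset L[\bx]$, the image of multiplication by $x$, is $L[\bx]/(x^{p^e-1})$). Nor can you hope that the theorem itself rescues the claim: sub-bialgebras of a flat $A$-module need not be flat, so even granting flatness of $B$ the individual $B_X$ may fail it. The filtered-union-of-finite-subcoalgebras presentation is the field-case reflex (the fundamental theorem of coalgebras), and it is exactly what does not survive the passage from a field to a ring.

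The missing idea, which is how the cited proofs actually proceed, is to exhibit $B$ as a filtered colimit of the vector bundles $\omega_A(X)^\vee\otimes_A\omega_A(X)$ \emph{themselves}, with transition maps that are built from rigidity (duals and evaluations/coevaluations make the coend diagram cofinally filtered) and are in general \emph{not} injective; flatness then follows from Lazard's theorem, since a filtered colimit of finite projective modules is flat, with no need for any individual stage or image to inject into $B$. In other words: never take images inside $B$ --- a filtered colimit of flat modules along arbitrary maps is flat, whereas a filtered union of non-flat submodules tells you nothing. Secondary points (that $\aut(\omega_A)$ is detected on $\mathcal{T}'_A$ via spanning and exactness, and the cokernel-chasing for full faithfulness and essential surjectivity, the latter implicitly using flatness of $B$ to make finite comodules well behaved) are in line with the cited arguments, but as written the proposal does not prove the one assertion --- flatness --- that distinguishes this theorem from the classical Tannakian duality recalled in Theorem~\ref{tannakaduality}.
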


In view of the previous theorem we generalize the notion of Galois pair to the following:
\begin{definition}
A geometric $(K,L)$-pair $(\mathcal{T},\omega)$, $(\mathcal{T}_L,\omega_L)$ is said to be \emph{$e$-modular} if
\begin{itemize}
\item[(C1)] $L/K$ is finite modular of exponent $e$ and there exists a $L[\bx]$-linear categories $\mathcal{T}_{L[\bx]}'\subset\mathcal{T}_{L[\bx]}$ together with a tensor functor 
\[\omega_{L[\bx]}:\mathcal{T}_{L[\bx]}\to \Modf_{L[\bx]}\]
satisfying all hypothesis of Theorem~\ref{ta:gen};
\item[(C2)]there exists an additive, exact tensor functor $\centerdot_{L[\bx]}:\mathcal{T}_L\to\mathcal{T}'_{L[\bx]}$, $A\mapsto A_{L[\bx]}$ such that if $i:L\hookrightarrow L[\bx]$ is the natural inclusion one has that 
\[\omega_{L[\bx]}\circ \centerdot_{L[\bx]}= i^*\circ\omega_L\]
and moreover for every $X\in\mathcal{T}$, $Y\subset (X_L)_{L[\bx]}$ and $\sigma\in \HG(L/K)$ there exists $Y'\in\mathcal{T}_{L[\bx]}$, $Y'\subset X_{L[\bx]}$ such that $\omega_{L[\bx]}(Y')=\sigma (\omega_{L[\bx]}(Y))$ where $\HG(L/K)$ acts naturally on $\omega_{L[\bx]}((X_L)_{L[\bx]})=\omega(X)\otimes_KL[\bx]$.
\item[(C3)] let $X\in\mathcal{T}$, then if $(Y\subset X_L)\in\mathcal{T}_L$ and $V\subset X\in \Vtf_K$ are such that $\omega_L(Y\subset X_L)=(V\subset \omega(X))_L$,  there exists $Z\subset X\in\mathcal{T}$ such that $(Z\subset X)_L=Y\subset X_L$.
\item[(C4)] there exists an additive tensor functor $\centerdot_{_0}:\mathcal{T}_{L[\bx]}\to\mathcal{T}_L$ such that 
\[\omega_L\circ\centerdot_{0}=\omega_{L[\bx]}(\centerdot)\otimes_{L[\bx]}L.\]
\end{itemize}
\end{definition}

Before proving the main result of this section, let us recollect some properties of the category $\Mod_{L[\bx]}$:
\begin{lemma}
Let $M$ be an $L[\bx]$-module, then the following are equivalent:
\begin{itemize}\label{a}
\item[i)] $M$ is free;
\item[ii)]  $M$ is projective;
\item[iii)] $M$ is flat;
\item[iv)] $M$ is injective.
\end{itemize}
\end{lemma}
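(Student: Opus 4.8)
The plan is to exploit the very special structure of the ring $R = L[\bx] = L[x]/(x^{p^e})$: it is a commutative local Artinian ring whose unique maximal ideal is $\mathfrak{m} = (\bx)$, with residue field $R/\mathfrak{m} = L$, and since $\bx$ is nilpotent every element outside $\mathfrak{m}$ is a unit. Being finite-dimensional over $L$ it is Artinian, and as a quotient of the PID $L[x]$ it is a chain ring whose only ideals are the powers $(\bx^k)$ for $0 \le k \le p^e$. All four equivalences will be obtained by matching each arrow to a standard structure theorem, so that essentially no module computation is needed; the real point is to check that the relevant hypotheses (locality, being perfect, self-injectivity) hold for $R$.

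The implications (i)$\Rightarrow$(ii)$\Rightarrow$(iii) are formal and hold over any ring, since a free module is projective and a projective module is flat. To close the circle I would argue (iii)$\Rightarrow$(ii)$\Rightarrow$(i). Since $R$ is Artinian it is perfect, and over a perfect ring every flat module is projective (Bass), giving (iii)$\Rightarrow$(ii); since $R$ is local, Kaplansky's theorem that projective modules over a local ring are free yields (ii)$\Rightarrow$(i). This already establishes (i)$\Leftrightarrow$(ii)$\Leftrightarrow$(iii), crucially with no finite-generation hypothesis on $M$.

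It remains to fold in (iv). Here I would verify that $R$ is quasi-Frobenius by showing it is self-injective via Baer's criterion: a homomorphism $(\bx^k) \to R$ sending $\bx^k \mapsto a$ is well defined exactly when $\bx^{p^e - k} a = 0$, that is, when $a \in \Ann(\bx^{p^e - k}) = (\bx^k)$, so $a = \bx^k b$ and the map is multiplication by $b$, which extends to all of $R$. An Artinian self-injective ring is quasi-Frobenius, and over a quasi-Frobenius ring the classes of projective and injective modules coincide for arbitrary modules. This gives (ii)$\Leftrightarrow$(iv) and completes the cycle.

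The main obstacle is handling modules that are \emph{not} finitely generated: for finitely generated $M$ the structure theory over the chain ring $R$ makes everything transparent through an explicit decomposition, but the lemma is stated for all $M$, and it is precisely in the infinitely-generated range that one must appeal to the non-trivial theorems of Bass and Kaplansky and to the quasi-Frobenius characterization, rather than to an elementary argument.
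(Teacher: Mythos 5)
Your proposal is correct and takes essentially the same route as the paper: the formal implications (i)$\Rightarrow$(ii)$\Rightarrow$(iii) with the reverse arrows supplied by $L[\bx]$ being a local Artin (hence perfect) ring, via Bass's flat-implies-projective and Kaplansky's projective-implies-free, and (ii)$\Leftrightarrow$(iv) from $L[\bx]$ being a (quasi-)Frobenius ring. The only difference is that where the paper simply cites Lam for the Frobenius property, you verify self-injectivity directly with Baer's criterion using $\Ann(\bx^{p^e-k})=(\bx^k)$, which correctly unpacks that citation.
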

\begin{proof}
It is classical that $(i)\Rightarrow (ii)\Rightarrow (iii)$, and the reverse arrows are given by the fact that $L[\bx]$ is a local Artin ring.
Moreover, $L[\bx]$ is a Frobenius ring (see \cite[Ex.~3.15B]{lam})  in particular a module is projective if and only if it is injective (\cite[Thm.~15.9]{lam}). 
\end{proof}

\begin{theorem}
Let $(\mathcal{T},\omega)$ and $(\mathcal{T}_L,\omega_L)$ be a geometric $e$-modular $(K,L)$-pair. Let $X\in\mathcal{T}$ such that $\mathcal{T}_{L[\bx]}'\cap\langle X_{L[\bx]}\rangle_\otimes$ spans $\langle X_{L[\bx]}\rangle_\otimes$\footnote{As in Theorem~\ref{ta:gen}, a subcategory $\mathcal{T}'\subset\mathcal{T}$ \emph{spans} $\mathcal{T}$ if all objects of the latter are quotients of of objects in $\mathcal{T}'$.}. Then the natural inclusion
\[\mu:\pi(X_L,\omega_L)\to \pi(X,\omega)_L\]
is an isomorphism.
\end{theorem}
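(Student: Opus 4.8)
The plan is to run Gabber's argument from the proof of Theorem~\ref{perfect}, replacing Galois descent of the group by descent along $L[\bx]$ via Proposition~\ref{ins}. The whole thing reduces to showing that $\pi(X_L,\omega_L)\subset\GL(\omega(X)_L)$ descends to $K$; once we have a $K$-subgroup $G\subset\GL(\omega(X))$ with $\pi(X_L,\omega_L)=G_L$, the closing argument is verbatim Gabber with $(C3)$ in place of $(B3)$, as explained below. Now, applying Proposition~\ref{ins} with $V=\mathcal{O}(\GL(\omega(X)))$ and $W$ the defining ideal of $\pi(X_L,\omega_L)$ inside $\mathcal{O}(\GL(\omega(X)_L))=\mathcal{O}(\GL(\omega(X)))\otimes_KL$, the group $\pi(X_L,\omega_L)$ descends to $K$ precisely when its base change $\pi(X_L,\omega_L)_{L[\bx]}$, as an ideal of $\mathcal{O}(\GL(\omega(X)_{L[\bx]}))$, is invariant under $\HG(L/K)$. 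The key geometric input I would isolate is the identification
\[\pi(X_L,\omega_L)_{L[\bx]}=\pi(X_{L[\bx]},\omega_{L[\bx]}),\]
i.e. that forming the monodromy group commutes with the (non-reduced) base change $L\hookrightarrow L[\bx]$, where $\pi(X_{L[\bx]},\omega_{L[\bx]})$ is the flat $L[\bx]$-group scheme produced by Theorem~\ref{ta:gen} (whose hypotheses hold by the assumption that $\mathcal{T}'_{L[\bx]}\cap\langle X_{L[\bx]}\rangle_\otimes$ spans $\langle X_{L[\bx]}\rangle_\otimes$). Granting the identification, the required $\HG(L/K)$-invariance is immediate from axiom $(C2)$: the family of submodules $\{\omega_{L[\bx]}(Y')\mid Y'\subset\mathcal{C}(X_{L[\bx]})\}$ whose common stabiliser is $\pi(X_{L[\bx]},\omega_{L[\bx]})$ is permuted by $\HG(L/K)$, hence so is the group.

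To prove the identification I would proceed as follows. The base-change functor $\centerdot_{L[\bx]}$ of $(C2)$ induces, by functoriality of the monodromy group, a closed immersion $\iota\colon\pi(X_{L[\bx]},\omega_{L[\bx]})\hookrightarrow\pi(X_L,\omega_L)_{L[\bx]}$ of affine $L[\bx]$-group schemes; the source is flat over $L[\bx]$ by Theorem~\ref{ta:gen} and the target is flat, being a base change. It then suffices to check that $\iota$ is an isomorphism on the special fibre over $L[\bx]/(x)=L$. Writing $\iota$ on coordinate rings as a surjection $\mathcal{O}(\pi(X_L,\omega_L)_{L[\bx]})\twoheadrightarrow\mathcal{O}(\pi(X_{L[\bx]},\omega_{L[\bx]}))$ with kernel $J$, flatness of the source kills the relevant $\mathrm{Tor}$, so $J\otimes_{L[\bx]}L$ is the kernel of the special-fibre map; if that map is an isomorphism then $J\otimes_{L[\bx]}L=0$, that is $J=xJ$, and since $x$ is nilpotent in the local Artin ring $L[\bx]$ (Lemma~\ref{a}) Nakayama's lemma forces $J=0$. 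The special fibre itself is computed with axiom $(C4)$: the reduction functor $\centerdot_0$ satisfies $\centerdot_0\circ\centerdot_{L[\bx]}\simeq\mathrm{id}$ on fibre functors and carries $\langle X_{L[\bx]}\rangle_\otimes$ into $\langle X_L\rangle_\otimes$, whence an induced morphism $\pi(X_L,\omega_L)\to\pi(X_{L[\bx]},\omega_{L[\bx]})\otimes_{L[\bx]}L$ inverse to the special fibre of $\iota$; thus $\pi(X_{L[\bx]},\omega_{L[\bx]})\otimes_{L[\bx]}L\cong\pi(X_L,\omega_L)$, which matches the special fibre of $\pi(X_L,\omega_L)_{L[\bx]}$.

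With $\pi(X_L,\omega_L)=G_L$ in hand, I finish as in Theorem~\ref{perfect}. Chevalley's theorem yields a linear algebra construction $\mathcal{C}_G$ and a $K$-subspace $W\subset\mathcal{C}_G(\omega(X))$ with $G=\Stab(W)$. Since $W_L$ is stabilised by $G_L=\pi(X_L,\omega_L)$, Lemma~\ref{description} gives $W_L=\omega_L(Y)$ for some $Y\subset\mathcal{C}_G(X_L)$, and as $W_L=(W\subset\mathcal{C}_G(\omega(X)))_L$ is defined over $K$, axiom $(C3)$ descends $Y$ to some $Z\subset\mathcal{C}_G(X)$ in $\mathcal{T}$ with $\omega(Z)=W$. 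Hence $\pi(X,\omega)$ stabilises $W$, so $\pi(X,\omega)\subseteq G$; combined with the closed immersion $\mu\colon\pi(X_L,\omega_L)\hookrightarrow\pi(X,\omega)_L$ and $\pi(X_L,\omega_L)=G_L$ this forces $\pi(X,\omega)_L=G_L=\pi(X_L,\omega_L)$, i.e. $\mu$ is an isomorphism.

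The main obstacle is precisely the displayed identification over the non-reduced ring $L[\bx]$: there one can invoke neither Galois descent nor the field-theoretic stabiliser description of Lemma~\ref{description} directly, and a priori $\pi(X_{L[\bx]},\omega_{L[\bx]})$ could be strictly smaller than $\pi(X_L,\omega_L)_{L[\bx]}$, since sub-objects of $\mathcal{C}(X_{L[\bx]})$ whose fibre is a non-free $L[\bx]$-module need not descend to $L$ and would impose extra stabiliser conditions. What rescues the argument is the interplay of the good homological properties of $L[\bx]$ recorded in Lemma~\ref{a}—so that flatness together with agreement on the special fibre can be upgraded to an isomorphism by Nakayama—with the reduction functor of $(C4)$, which controls that special fibre; everything else is a faithful transcription of Gabber's proof with $(C2)$–$(C3)$ playing the roles of $(B2)$–$(B3)$.
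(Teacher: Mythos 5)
Your overall architecture coincides with the paper's own proof: reduce, via Proposition~\ref{ins}, to the $\HG(L/K)$-invariance of $\pi(X_L,\omega_L)_{L[\bx]}$; identify this base change with the flat $L[\bx]$-group scheme $G$ furnished by Theorem~\ref{ta:gen} (a closed immersion $G\hookrightarrow\pi(X_L,\omega_L)_{L[\bx]}$, which is an isomorphism because it is one on closed fibres, the closed fibre being controlled by the reduction functor of $(C4)$); and close with Chevalley plus $(C3)$ verbatim as in Theorem~\ref{perfect}. Your Tor/Nakayama step (the kernel $J$ satisfies $J=xJ$ with $x$ nilpotent, so $J=0$) is an equivalent repackaging of the paper's argument, which instead notes that the defining ideals are free over the local Artin ring $L[\bx]$, so the cokernel of $\mathcal{I}_{\pi(X_L,\omega_L)}\otimes_LL[\bx]\subset\mathcal{I}_G$ is free and vanishes as soon as its closed fibre does.

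There is, however, one step that does not hold as stated: you derive the $\HG(L/K)$-invariance from the claim that $\pi(X_{L[\bx]},\omega_{L[\bx]})$ \emph{is} the common stabiliser of the full family $\{\omega_{L[\bx]}(Y')\mid Y'\subset\mathcal{C}(X_{L[\bx]})\}$. This is a stabiliser description over $L[\bx]$ which is not available: Lemma~\ref{description} rests on Chevalley's theorem and Tannaka duality over a \emph{field}, and over the Artin ring $L[\bx]$ the inclusion of the common stabiliser into $G$ is precisely what would need proof (it is not implied by your identification, which only exhibits $\pi(X_L,\omega_L)_{L[\bx]}$ as the stabiliser of the base-changed subfamily $\{\omega_L(Z)\otimes_LL[\bx]\}$); moreover, for $Y'$ whose fibre is not projective the stabiliser functor need not even be representable by a closed subgroup. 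The paper engineers around exactly this point: it restricts to the family $\mathcal{S}$ of sub-objects lying in $\mathcal{T}'_{L[\bx]}$, whose fibres and quotients are projective, so that by \cite[I, Sec.~2.12, (5)]{Jan} the stabiliser $H=\Stab_{\GL(\omega(X)_{L[\bx]})}(\mathcal{S})$ is a closed subgroup; $H$ is $\HG(L/K)$-invariant by the permutation part of $(C2)$, it sits in the chain $G\subseteq H\subseteq\pi(X_L,\omega_L)_{L[\bx]}$, and your very closed-fibre argument collapses the chain, transferring the invariance of $H$ to $\pi(X_L,\omega_L)_{L[\bx]}$. (One can also salvage your route a posteriori, using only the base-changed family, Lemma~\ref{description} over $L$, $(C2)$ and naturality of tensor automorphisms, but some such detour is mandatory.) Relatedly, your assertion that $\centerdot_0$ carries $\langle X_{L[\bx]}\rangle_\otimes$ into $\langle X_L\rangle_\otimes$ is where the paper actually spends the spanning hypothesis and Lemma~\ref{a}: for a sub-object $Y\in\mathcal{T}'_{L[\bx]}$ the fibre sequence splits because the quotient's fibre is projective, hence free, so reduction preserves the inclusion, and a general $Y$ is handled as a quotient of such a $Y'$; you correctly name these ingredients but do not run the argument.
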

\begin{proof} 
We follow the ideas of Theorem~\ref{perfect}: first of all we want to prove that $\pi(X_L,\omega_L)$ is defined over $K$, which by Proposition~\ref{ins} is equivalent to show that $\pi(X_L,\omega_L)_{L[\bx]}\subset \omega(X)\otimes_KL[\bx]$ is $\HG(L/K)$-invariant.
Let us shorten $(X_L)_{L[\bx]}$ by simply writing $X_{L[\bx]}$. Let us consider the set
\[\mathcal{S}=\{Y\in\mathcal{T}_{L[\bx]}'\mid Y\subset\mathcal{C}(X_{L[\bx]})\}\]
where $\mathcal{C}$ varies along all linear algebra constructions, and let us consider the functor $\Stab_{\GL(\omega(X)_{L[\bx]})}(\mathcal{S})$ (note that by the axioms $(A1)$ and $(C2)$ we have that $\omega_{L[\bx]}(X_{L[\bx]})=\omega(X)_{L[\bx]}$). For every $Y\in\mathcal{S}$ we have that $\omega_{L[\bx]}(Y)$ is projective (hence injective by Lemma~\ref{a}) on $L[\bx]$, in particular $\mathcal{C}(\omega(X)_{L[\bx]})/\omega_{L[\bx]}(Y)$ is projective as well and, by \cite[I, Sec~2.12,(5)]{Jan} $\Stab_{GL(\omega(X)_{L[\bx]})}(Y)$ is representable by a closed subgroup of $GL(\omega(X)_{L[\bx]})$. In particular $\Stab_{\GL(\omega(X)_{L[\bx]})}(\mathcal{S})$ is also representable by a closed subgroup of $\GL(\omega(X)_{L[\bx]})$ that we denote $H$: it is a $\HG(L/K)$-invariant subgroup of $\GL(\omega(X)_{L[\bx]})$ (by $(C3)$) contained in $\pi(X_L,\omega_L)_{L[\bx]}$.

In order to prove that they are actually equal, and hence that the latter is $\HG(L/K)$-invariant, we need an additional step. By Theorem~\ref{ta:gen}, there exists a flat group scheme $G$ representing $\aut(\omega_{L[\bx]})$ (where we consider $\omega_{L[\bx]}$ restricted to $\langle X_{L[\bx]}\rangle_\otimes$),  in particular for every $L[\bx]$-algebra $a:L[\bx]\to R$ there is a natural monomorphism
\[j_R:G(R)=\aut(a^*\circ\omega_L)\to H(R)=\Stab_{GL(\omega(X)_{L[\bx]})\otimes_{L[\bx]}R)}(\mathcal{S}\otimes_{L[\bx]}R)\] 
sending $\alpha\in G(R)$ to its value $\alpha_{X_{L[\bx]}}$ on $X_{L[\bx]}$. As $L[\bx]$ is Artinian, $j:G\to H$ is a closed immersion (see  \cite[$VI_A$, Prop.~2.5.2 (c)]{SGA3}).

We have hence the following chain of closed subgroup schemes over $\Spec L[\bx]$:
\[G\subset H\subset \pi(X_L,\omega_L)_{L[\bx]}\subset GL(\omega(X))_{L[\bx]})\]

In order to conclude let us remark the following: both $G$ and $\pi(X_L,\omega_L)_{L[\bx]}$  have free defining ideals $\mathcal{I}_{\pi(X_L,\omega_L)}\otimes_LL[\bx]\subset \mathcal{I}_G$ over $L[\bx]$: for $\pi(X_L,\omega_L)_{L[\bx]}$ it is clear, while $G$ is flat, hence projective (Lemma~\ref{a}), so $\mathcal{I}_G$ is a direct sum of the ring of global sections of $\GL(\omega(X)_{L[\bx]})$, hence projective as well. In particular as the short exact sequence
\begin{equation}\label{ses}
\mathcal{I}_{\pi(X_L,\omega_L)}\otimes_LL[\bx]\subset \mathcal{I}_G\to Q
\end{equation}
splits, the cokernel $Q$ is a direct summand of a free module hence it is projective (and free).
Let $G_0$ be the closed fiber of $G$, in order to conclude that $\pi(X_L,\omega_L)\otimes_LL[\bx]$ is $\HG(L/K)$-invariant is it is enough to show that it is equal to $G$, which is equivalent to show that the inclusion $G_0\subset\pi(X_L,\omega_L)$ is an equality as the  split exact sequence \eqref{ses} remains exact after taking its closed fiber.

Now if $a:L\to R$ is an $L$-algebra, by definition $G_0(R)=G(R)$ where on the right hand side $R$ is considered as an $L[\bx]$-algebra through the projection $L[\bx]\to L$. 
Let now $Y\subset X_{L[\bx]}$, if $Y\in\mathcal{T}'_{L[\bx]}$ and $Q$ is the cokernel of this inclusion, we have that the short exact sequence
\[\omega_{L[\bx]}(Y)\to \omega_{L[\bx]}(X_{L[\bx]})\to\omega_{L[\bx]}(Q)\]
splits, in particular the last term is projective, hence free, and thus by $(C4)$ one has that $\omega_L(Y_0)\subset\omega_L(X)$, which by exactness implies $Y_0\subset X$, in particular $Y_0\in\langle X\rangle_\otimes$. If $Y$ is not in $\mathcal{T}'_{L[\bx]}$ then by $(C1)$ it is a quotient of $Y'\in\mathcal{T}'_{L[\bx]}$, thus $\omega(Y_0)$ is a quotient of $\omega(Y'_0)$ and by exactness $Y_0$ is a quotient of $Y'_0$ in particular it is in $\langle X\rangle_\otimes$. This, together with Lemma~\ref{description}, implies that $\pi(X_L,\omega_L)(R)\subset G_0(R)$ which completes the argument.

The rest of the proof goes exactly as in Theorem~\ref{perfect}, using $(C3)$ instead of $(B3)$.
\end{proof}

\begin{proposition}\label{dmod}
Let $S$ be a smooth geometrically connected scheme over a field $K$ of positive characteristic, with $S(K)\neq\emptyset$. Let $s\in S(K)$ and $L/K$ be a finite modular field extension of exponent $e$. Then $(\D\Mod(S/K),\omega_s)$ and $(\D\Mod(S_L/L),\omega_s)$ are an $e$-modular geometric pair and for every $\E=(E,\nabla)\in\D\Mod(S/K)$ the natural inclusion
\[\mu:\pi(\E_L,\omega_s)\subset \pi(\E,\omega_s)_L\]
is an isomorphism.
\end{proposition}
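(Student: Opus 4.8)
The plan is to exhibit explicitly the data of an $e$-modular pair attached to $\D\Mod(S/K)$ and $\D\Mod(S_L/L)$ and then to feed it into the preceding Theorem. Write $L[\bx]=L[x]/(x^{p^e})$, $K[\bx]=K[x]/(x^{p^e})$ and $S_{L[\bx]}=S\times_{\Spec K}\Spec L[\bx]$, a smooth connected scheme over the Artinian local ring $L[\bx]$. I take $\mathcal{T}_{L[\bx]}$ to be the category of $\Oh_{S_{L[\bx]}}$-coherent $\D$-modules relative to $L[\bx]$, $\mathcal{T}'_{L[\bx]}$ the full subcategory of those whose underlying $\Oh$-module is locally free (the stratified bundles), and $\omega_{L[\bx]}$ the fibre at $s\in S(K)\subset S(L[\bx])$. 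The three functors are all geometric base changes: $\centerdot_L$ is pullback along $S_L\to S$, $\centerdot_{L[\bx]}$ pullback along the thickening $S_{L[\bx]}\to S_L$ (from $L\hookrightarrow L[\bx]$), and $\centerdot_0$ pullback along the closed immersion $S_L\hookrightarrow S_{L[\bx]}$ induced by $L[\bx]\twoheadrightarrow L$. With these choices $(A1)$ and $(C4)$ are immediate, since pullback and reduction modulo $x$ commute with taking the fibre at the $K$-rational point $s$; likewise $\centerdot_{L[\bx]}$ lands in $\mathcal{T}'_{L[\bx]}$ because the pullback of a locally free sheaf is locally free.

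The technical heart is $(C1)$, i.e.\ checking that $(\mathcal{T}_{L[\bx]},\mathcal{T}'_{L[\bx]},\omega_{L[\bx]})$ meets the hypotheses of Theorem~\ref{ta:gen} over the \emph{non-reduced} base $L[\bx]$. Rigidity of $\mathcal{T}'_{L[\bx]}$ is clear (dual bundle with the dual stratification), and the fibre of a stratified bundle is a finite free $L[\bx]$-module, hence projective and in $\Bun_{L[\bx]}$ by Lemma~\ref{a}. The delicate point is the exactness and faithfulness of $\omega_{L[\bx]}$ on all of $\mathcal{T}_{L[\bx]}$, where non-locally-free objects (such as $\Oh_{S_L}=\Oh_{S_{L[\bx]}}/x$) occur and where an underived fibre functor is a priori only right exact. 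Here I would use that the stratification supplies, on the formal neighbourhood of $s$, a canonical Taylor isomorphism $\widehat{C}_s\cong \widehat{\Oh}_{S_{L[\bx]},s}\,\widehat{\otimes}_{L[\bx]}\,\omega_{L[\bx]}(C)$ for every $C\in\mathcal{T}_{L[\bx]}$; this formal trivialisation is valid over an arbitrary base and identifies $\omega_{L[\bx]}$, after the faithfully flat extension $L[\bx]\to\widehat{\Oh}_{S_{L[\bx]},s}$, with the forgetful functor on $L[\bx]$-modules, whence exactness and faithfulness. The spanning of $\mathcal{T}_{L[\bx]}$ by $\mathcal{T}'_{L[\bx]}$, and of $\langle X_{L[\bx]}\rangle_\otimes$ by its locally free objects, is then routine from the same local structure.

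For $(C2)$ the required action of $\HG(L/K)=\Aut(L[\bx]/K[\bx])$ is geometric: every $\sigma\in\HG(L/K)$ is in particular a $K$-automorphism of $L[\bx]$, hence induces an automorphism of $S_{L[\bx]}=S\times_{\Spec K}\Spec L[\bx]$ over $S$. Since $X_{L[\bx]}$ is pulled back from $S$ it is canonically $\sigma$-invariant, so for $Y\subset X_{L[\bx]}$ one sets $Y'=\sigma^{*}Y\subset X_{L[\bx]}$; the compatibility $\omega_{L[\bx]}(Y')=\sigma(\omega_{L[\bx]}(Y))$ holds because the fibre at the $K$-point $s$ carries the tautological $\HG(L/K)$-action through $L[\bx]$. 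For $(C3)$ I would not appeal to Galois descent but to faithfully flat descent along $L/K$: given $Y\subset X_L$ with $\omega_L(Y)=V_L$ for some $V\subset\omega(X)$ over $K$, the two pullbacks $p_1^{*}Y,p_2^{*}Y\subset X_{L\otimes_K L}$ have the \emph{same} fibre $V\otimes_K(L\otimes_K L)$ precisely because $V$ is already defined over $K$; as a sub-$\D$-module is determined by its fibre (the fibre functor being faithful and exact over any base), this forces $p_1^{*}Y=p_2^{*}Y$, i.e.\ the canonical descent datum, and $Y$ descends to the desired $Z\subset X$.

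Having verified $(A1)$ and $(C1)$–$(C4)$ together with the spanning hypothesis, the isomorphism $\mu$ follows verbatim from the preceding Theorem, whose proof supplies the genuinely inseparable input: the equations cutting out $\pi(X_L,\omega_s)$ become $\HG(L/K)$-invariant after the base change to $L[\bx]$, so Proposition~\ref{ins} forces $\pi(X_L,\omega_s)$ to descend to $K$, after which Chevalley's theorem and $(C3)$ close the argument. I expect the main obstacle to be exactly $(C1)$: running Tannakian reconstruction over the non-reduced Artinian ring $L[\bx]$, and in particular proving that the fibre functor stays exact there. The good homological behaviour of $L[\bx]$ recorded in Lemma~\ref{a} (free $=$ projective $=$ flat $=$ injective) is what keeps the stabiliser and group-scheme arguments of the preceding Theorem intact, and it is the reason the whole strategy survives the loss of the field structure.
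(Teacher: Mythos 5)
There is a genuine gap, and it sits exactly where you wave your hands: the spanning conditions. Both Theorem~\ref{ta:gen} (hence axiom $(C1)$) and the main theorem of Section~\ref{three} require that every object of $\mathcal{T}_{L[\bx]}$ (respectively, of $\langle X_{L[\bx]}\rangle_\otimes$) be a \emph{quotient} of a locally free stratified module, and this is not ``routine from the same local structure.'' The Taylor trivialisation is a purely formal-local statement at $s$: it identifies $\widehat{\E}_s$ with $\widehat{\Oh}\,\widehat{\otimes}_{L[\bx]}\,\omega_{L[\bx]}(\E)$, but it produces no global $\D$-linear surjection from a locally free object. Indeed, a surjection $\Oh_{S_{L[\bx]}}^{\,n}\twoheadrightarrow\E$ of stratified modules amounts to $n$ globally horizontal generating sections, which do not exist in general. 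The paper supplies the missing idea: push forward along the finite flat projection $p\colon S_{L[\bx]}\to S_L$, observe that $\D_{S_L/L}\subset p_*\D_{S_{L[\bx]}/L[\bx]}$ makes $p_*\E$ a coherent $\D_{S_L/L}$-module, which is \emph{automatically locally free} because $S_L$ is smooth over the field $L$ (this is where the field case is genuinely invoked), and then use that the counit $p^*p_*\E\to\E$ is a surjection of $\D$-modules from a locally free object. For the finer hypothesis that $\langle X_{L[\bx]}\rangle_\otimes\cap\mathcal{T}'_{L[\bx]}$ spans $\langle X_{L[\bx]}\rangle_\otimes$ one moreover needs $p^*p_*\E\subset p^*p_*p^*\F\simeq(p^*\F)^{p^e}$ for $\E\subset p^*\F$, so that the locally free cover stays inside the Tannakian subcategory generated by $X_{L[\bx]}$. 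None of this is in your proposal, and without it the hypotheses of both theorems you invoke are unverified; the non-locally-free objects you correctly flag (such as $\Oh_{S_{L[\bx]}}/x$) are precisely why the condition has content.

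For the rest, your route is sound but partly different from the paper's. Your verification of exactness and faithfulness of $\omega_{L[\bx]}$ via the Taylor isomorphism and faithfully flat descent along $L[\bx]\to\widehat{\Oh}_{S_{L[\bx]},s}$ is a legitimate alternative: the paper instead reduces to the field case by the same pushforward $p_*$ together with flat base change along the section, so that exactness of $\omega_{L[\bx]}$ follows from exactness of the fibre functor on $\D\Mod(S_L/L)$ — note that the pushforward thus does double duty (exactness \emph{and} spanning), which is why your avoiding it for exactness leaves the spanning orphaned. Your proof of $(C3)$ by fppf descent along $\Spec(L\otimes_KL)\rightrightarrows\Spec L$, using that a sub-$\D$-module is determined by its fibre once the fibre functor over the Artinian ring $L\otimes_KL$ is exact and faithful (again by your Taylor argument) and that the cocycle condition is automatic for subobjects, is also a valid variant of the paper's argument, which instead runs the Gabber-style invariance computation and appeals to the modular descent of Proposition~\ref{ins}; just make the exactness claim over $L\otimes_KL$ explicit, since your Taylor argument was only stated over $L[\bx]$. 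Your treatments of $(A1)$, $(C2)$ and $(C4)$ agree with the paper's (which dismisses them as obvious) and are fine.
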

\begin{proof}
Let us first prove the existence of the category of axiom $(C1)$. We take $\mathcal{T}_{L[\bx]}=\D\Mod(S_{L[\bx]}/L[\bx])$ to be the category of $\Oh_{S_{L[\bx]}}$-coherent modules endowed with a $\D$-module structure, the rigid subcategory $\mathcal{T}_{L[\bx]}'$ to be given by locally free objects and the fiber functor 
\[\omega_{L[\bx]}:\D\Mod(S_{L[\bx]}/L[\bx])\to \Mod_{L[\bx]}\]
to be the base change of the section $s\in S(K)$. Namely, $\omega_{L[\bx]}(E,\nabla)=E\otimes_{\Oh_{S_{L[\bx]}}} \kappa(s)[\bx]$.

We claim that this functor is exact and faithful. In order to prove this, notice that if we denote $p:S_{L[\bx]}\to S_L$ the projection morphism, we have that  $\D_{S_L/L}\subset p_*\D_{S_{L[\bx]}/L[\bx]}$, in particular $p_*E$ is a $\D_{S_L/L}$-module, and for every morphism $f$ of $\D_{S_{L[\bx]}/L[\bx]}$-modules, $p_*(f)$ is a morphism of $\D_{S_L/L}$-modules.

Let now consider a short exact sequence in $\D\Mod(S_{L[\bx]}/L[\bx])$
\[0\to \E'\to \E \to \E''\to 0\]
and let us apply the functor $\omega_{L[\bx]}$:
\[E'\otimes_{\Oh_{S_{L[\bx]}}} \kappa(s)[\bx]\to E\otimes_{\Oh_{S_{L[\bx]}}} \kappa(s)[\bx]\to E''\otimes_{\Oh_{S_{L[\bx]}}} \kappa(s)[\bx]\to 0\]
we need to check whether the sequence is exact on the left. Note that this is the case if and only if this holds for the same exact sequence considered as $L$-modules, that is after applying $p'_*$ to the sequence, where $p':\Spec L[\bx]\to \Spec L$ is given by the base change of $p$ along the section $s:\Spec L\to X_L$.  Note that the flat base change isomorphism respects the $\D$-module structure, in particular we have that after applying $p'_*$ the sequence is isomorphic to 
\[0\to\omega(p_*\E')\to \omega(p_*\E) \to \omega(p_*\E'')\to 0\]
which is exact by exactness of $\omega$.
In order to prove faithfulness, we use a similar argument: let $f:\E\to \E'$ be a $\D_{X[\bx]/L[\bx]}$-module map, if $\omega_L[\bx](f)$  (and hence by flat base change also $\omega(p_*(f))$) is zero then $p_*(f)$ must be zero by faithfulness of $\omega$, but then $f$ must be zero as well, as it is just $f$ seen as a morphism between $\Oh_X$-modules.

We want now to prove that every $\E\in\D\Mod(S_{L[\bx]}/L[\bx])$ is a quotient of a free object. Now, $p^*p_*\E$ is a locally free $\D\Mod(S_{L[\bx]}/L[\bx])$-module, and the unit map $p^*p_*\E\to\E$ is a surjective map of $\D$-modules.
Moreover if $\E$ is a subobject of $p^*\F$ for some $\F\in\D\Mod(S/K)$, then $p^*p_*\E$ is a subobject of $p^*p_*\F\simeq (p^*\F)^{p^e}$, in  particular  $\langle \F_{L[\bx]}\rangle_\otimes$ is spanned by its locally free objects.

Axiom $(C3)$  follows by the same arguments as in \ref{ex1} using Proposition~\ref{ins} as for axiom $(C2)$ and $(C4)$ they are obvious.
\end{proof}

\begin{corollary}\label{strat}
Let $S$ be a smooth geometrically connected scheme over $K$ with $S(K)\neq\emptyset$. Let $s\in S(K)$ and $L/K$ any field extension. Then for every $\E\in\D\Mod(S/K)$ the natural inclusion
\[\mu:\pi(\E_L,\omega_s)\subset \pi(\E,\omega_s)_L\]
is an isomorphism.
\end{corollary}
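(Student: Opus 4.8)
The plan is to deduce the general case from the two cases already in hand: finite \emph{separable} extensions, where Gabber's Theorem~\ref{perfect} applies (for $L/K$ finite Galois the pairs of Example~\ref{ex1} are Galois), and finite \emph{modular} extensions, where Proposition~\ref{dmod} applies. Everything rests on two formal features of $\mu$. It is always a closed immersion and it is functorial in the base field, so for a tower $K\subseteq M\subseteq L$ the map $\mu_{L/K}$ factors as $\pi(\E_L,\omega_s)\hookrightarrow\pi(\E_M,\omega_s)_L\hookrightarrow\pi(\E,\omega_s)_L$. Since a composite of closed immersions that is an isomorphism has both factors isomorphisms, and since a closed immersion of group schemes over a field which becomes an isomorphism after a (faithfully flat) field extension is already an isomorphism, I obtain the key principle: if $\mu_{G/K}$ is an isomorphism for some $G\supseteq L$, then $\mu_{L/K}$ is an isomorphism. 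The second feature, via Lemma~\ref{description}, is that $\pi(\E_L,\omega_s)\subseteq\GL(\omega_s(\E)_L)$ is cut out inside a noetherian scheme by the stabiliser conditions of finitely many subobjects $Y_j\subset\mathcal C_j(\E_L)$; each $Y_j$ is coherent, hence defined over a finitely generated subextension, and a standard limit argument gives $\pi(\E_L,\omega_s)=\pi(\E_{L_0},\omega_s)_L$ for a suitable finitely generated $L_0/K$. Thus it suffices to treat $L/K$ finitely generated.

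For $L/K$ finitely generated I fix a transcendence basis $t_1,\dots,t_n$, giving $K\subseteq K(t_1,\dots,t_n)\subseteq L$ with the lower step purely transcendental and the upper step finite; by the tower factorisation I treat them separately. The purely transcendental step is separable, and falls under the separable form of Gabber's argument mentioned in the introduction: here $K(t_1,\dots,t_n)\otimes_K K(t_1,\dots,t_n)$ is a domain, so a subobject with $K$-rational fibre automatically satisfies the faithfully flat descent datum and descends, which is exactly axiom $(B3)$ (concretely, one spreads the subobject over an open of affine space and specialises at a rational point). For the finite step let $F_s$ be the maximal separable subextension of $L/K(t_1,\dots,t_n)$. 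The separable part $F_s/K(t_1,\dots,t_n)$ is handled by embedding $F_s$ in a finite Galois extension $N$ and applying Theorem~\ref{perfect} to $N$ together with the key principle. For the purely inseparable part $L/F_s$ of exponent $e$ one has $L\subseteq F_s^{1/p^e}$, and since $L/F_s$ is finite, $L\subseteq M:=F_s(x_1^{1/p^e},\dots,x_k^{1/p^e})$ for finitely many elements $x_1,\dots,x_k$ of a $p$-basis of $F_s$; this $M/F_s$ is a finite modular extension, being a tensor product of elementary extensions, so Proposition~\ref{dmod} makes $\mu_{M/F_s}$ an isomorphism and the key principle yields $\mu_{L/F_s}$.

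I expect the purely inseparable step to be the main obstacle. Proposition~\ref{dmod} delivers an isomorphism only for \emph{modular} extensions, via the descent of Proposition~\ref{ins}, whereas a general finite purely inseparable $L/F_s$ is usually non-modular and cannot be inserted directly. The way around it is exactly the enlargement to the modular $M\supseteq L$ together with the rigidity of closed immersions in a tower: from $F_s\subseteq L\subseteq M$ one gets $\mu_{M/F_s}=(\mu_{L/F_s})_M\circ\mu_{M/L}$, and as $\mu_{M/F_s}$ is an isomorphism so is each factor, in particular $(\mu_{L/F_s})_M$; faithful flatness of $L\to M$ then descends this to $\mu_{L/F_s}$ itself. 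A second, milder difficulty is the purely transcendental step, where genuine Galois descent is unavailable and must be replaced by faithfully flat descent for the separable (reduced) extension $K(t_1,\dots,t_n)/K$, as above. Assembling the tower factorisations with the spreading-out reduction then proves that $\mu$ is an isomorphism for every field extension $L/K$, as claimed.
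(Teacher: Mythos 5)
Your skeleton is sound and, for the algebraic part, essentially the paper's own mechanism: the reduction to finitely generated extensions by spreading out the finitely many subobjects that cut out $\pi(\E_L,\omega_s)$ inside $\GL(\omega_s(\E)_L)$, the ``key principle'' that a closed immersion becoming an isomorphism over a larger field is an isomorphism, the Galois closure plus Theorem~\ref{perfect} for the separable step, and the enlargement of the purely inseparable $L/F_s$ into the finite modular extension $M=F_s(x_1^{1/p^e},\dots,x_k^{1/p^e})$ followed by Proposition~\ref{dmod} and faithfully flat descent --- this last move is exactly what the paper means by ``taking the modular closure''. The genuine gap is your purely transcendental step. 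Gabber's argument (Theorem~\ref{perfect}) is not just axiom $(B3)$: its first and crucial move is to show that the group $\pi(\E_L,\omega_L)$ \emph{itself} is defined over $K$, and this rests on $(B1)$ and $(B2)$, i.e.\ on $K$ being the fixed field of $\Aut(L/K)$ together with descent of $\Aut(L/K)$-invariant closed subschemes. For $L=K(t_1,\dots,t_n)$ both points break down: if $K$ is finite, $\Aut(K(t_1,\dots,t_n)/K)$ is a finite group and its fixed field is strictly larger than $K$, so $(B1)$ fails outright; and even for $K$ infinite, ``invariant $\Rightarrow$ defined over $K$'' is a Galois-descent statement that is standard for Galois algebraic extensions and for extensions of algebraically closed fields, but not for transcendental extensions, so it would require a separate proof. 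Your one-line justification --- $K(t_1,\dots,t_n)\otimes_K K(t_1,\dots,t_n)$ is a domain, spread out and specialise at a rational point --- only addresses $(B3)$, the descent of a subobject whose fibre is already known to be $K$-rational; it says nothing about the definedness of the group, which is where the whole difficulty sits. (Moreover, over a finite $K$ a nonempty open of $\A^n_K$ need not contain any rational point, so even the specialisation as you state it fails verbatim.)

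There are two ways to repair this. One is the paper's own order: for $L/K$ arbitrary, pass to the tower $K\subseteq\bar K\subseteq\bar L$; the extension $\bar L/\bar K$ of algebraically closed fields \emph{does} satisfy $(B1)$ and automorphism-invariance descent, so Theorem~\ref{perfect} disposes of the transcendental part there, while $\bar K/K$ is handled as in the paper's proof of Corollary~\ref{strat}: the monodromy groups are of finite type, hence $\pi(\E_{\bar K})=\pi(\E_{L'})_{\bar K}$ for some finite $L'/K$, and one concludes by your own modular-closure-plus-key-principle argument via Proposition~\ref{dmod} (and Proposition~\ref{ins}). The other repair keeps your decomposition but reverses the order: prove the finite case first (as you do), and then treat $K(t_1,\dots,t_n)/K$ by spreading out: the finitely many defining subobjects $Y_j\subset\mathcal{C}_j(\E_{K(t_1,\dots,t_n)})$ extend to subobjects $\mathcal{Y}_j$ over $S\times U$ for some open $U\subseteq\A^n_K$; the locus in $U$ where $\pi(\E,\omega_s)_U$ stabilises $\omega(\mathcal{Y}_j)$ is closed, and it contains every closed point $u\in U$, since $k(u)/K$ is finite and the finite case gives $\pi(\E_{k(u)})=\pi(\E)_{k(u)}$; density of closed points then yields that $\pi(\E)_{K(t_1,\dots,t_n)}$ stabilises all the $\omega(Y_j)$, hence $\pi(\E)_{K(t_1,\dots,t_n)}\subseteq\pi(\E_{K(t_1,\dots,t_n)})$, which together with the standing inclusion $\mu$ forces equality. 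With either repair your proof closes; as written, the transcendental step is a real gap, not a routine omission.
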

\begin{proof}
By Theorem~\ref{perfect} we can assume that $L$ is the algebraic closure of $K$, as both $\pi(\E_L,\omega_s)$ and $\pi(\E,\omega_s)$ are of finite type (see \cite[Prop.~2.20]{DM}) hence everything is defined over some finite field extension $L'$ of $K$, and taking the modular closure of $L'$ we are reduced to Proposition~\ref{dmod}.
\end{proof}

\section{Applications}\label{four}

\subsection{The structure of $\Repf(\pi(X_L))$}

Let $(\mathcal{T},\omega)$ and $(\mathcal{T}_L,\omega_L)$ be a geometric $(K,L)$-pair such that for $X\in\mathcal{T}$  we have $\pi(X_L)\simeq\pi(X)\otimes L$. Thanks to this isomorphism we can now describe $\langle X_L\rangle_\otimes$ in terms of $\langle X\rangle_\otimes$. 

By \cite[II,4.18]{Jan} we have that $\Hom_{\langle X\rangle_\otimes}(Y,Y')\otimes L=\Hom_{\langle X_L\rangle_\otimes}(Y_L,Y'_L)$ and  more in general $\Ext^i_{\langle X\rangle_\otimes}(Y,Y')\otimes L= \Ext^i_{\langle X_L\rangle_\otimes}(Y_L,Y'_L)$ for every $i$. Note that this means that if $\pi(X,\omega)$ is not semisimple we expect that the objects of $\langle X_L\rangle_\otimes$ are not all coming from $\langle X\rangle_\otimes$ by base change as new extension will appear as soon as $\Ext_{\langle X\rangle_\otimes}(Y,Y')\neq 0$.

If $K$ is algebraically closed we can be more precise in the comparison of $\langle X_L\rangle_\otimes$:

\begin{theorem}[{Jordan-H\"older, \cite[Thm~2.1]{Se}}] Let $\mathcal{C}$ be an abelian category and $X\in\mathcal{C}$ an object of finite length. Then $X$ admits a filtration, called \emph{composition series}
\[0=A_r\subset A_{r-1}\subset\dotsm\subset A_0=X\]
such that $S_i=A_i/A_{i+1}$ are simple for every $i$ (and different from zero). Moreover every such two filtration have the same length $r$ and their associated graded objects are isomorphic. The $S_i$ are called the \emph{composition factors} of $X$.
\end{theorem}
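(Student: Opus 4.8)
The plan is to treat existence and uniqueness separately, both resting on the isomorphism theorems valid in any abelian category. For existence I would use the hypothesis that $X$ has finite length, which by definition bounds the length of every strictly increasing chain of subobjects of $X$. Among all chains $0=A_r\subsetneq\dotsm\subsetneq A_0=X$ I would pick one of maximal length; maximality forces each successive quotient $A_i/A_{i+1}$ to have no proper nonzero subobject, since any such subobject would pull back to a refinement of the chain, contradicting maximality. Hence each $A_i/A_{i+1}$ is simple and the chosen chain is already a composition series.

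For uniqueness I would argue by induction on the length $r$ of a shortest composition series, the cases $r=0,1$ being immediate. Given two composition series with top steps $A_1\subsetneq A_0=X$ and $B_1\subsetneq B_0=X$, where $A_1,B_1$ are maximal proper subobjects (so $X/A_1$ and $X/B_1$ are simple), there are two cases. If $A_1=B_1$, I apply the inductive hypothesis to $A_1$: the two given series restrict to composition series of $A_1$, which are then equivalent, and adjoining the common top factor $X/A_1$ settles this case. If $A_1\neq B_1$, then since both are maximal their sum $A_1+B_1$ strictly contains $A_1$, hence equals $X$; setting $C=A_1\cap B_1$, the second isomorphism theorem gives $X/A_1=(A_1+B_1)/A_1\cong B_1/C$ and symmetrically $X/B_1\cong A_1/C$, so both $A_1/C$ and $B_1/C$ are simple.

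Now $C$ has finite length, hence a composition series with factor multiset $F_C$. Prepending it with $X\supset A_1\supset C$ and with $X\supset B_1\supset C$ yields two composition series $\Sigma$ and $\Sigma'$ of $X$, whose factor multisets are $\{X/A_1,A_1/C\}\cup F_C$ and $\{X/B_1,B_1/C\}\cup F_C$; the isomorphisms $X/A_1\cong B_1/C$ and $X/B_1\cong A_1/C$ show these multisets agree, so $\Sigma$ and $\Sigma'$ are equivalent. Since $\Sigma$ refines the top step $X\supset A_1$, the inductive hypothesis applied to $A_1$ shows $\Sigma$ is equivalent to the first given series, and symmetrically $\Sigma'$ is equivalent to the second. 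Chaining these equivalences yields that the two original composition series have equal length $r$ and isomorphic graded objects up to permutation of factors.

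The routine part is the bookkeeping with multisets of composition factors. The main obstacle I expect is using the isomorphism theorems correctly in the purely categorical setting, since we cannot chase elements: one must justify that for maximal subobjects with $A_1\neq B_1$ the sum $A_1+B_1$, formed as the image of $A_1\oplus B_1\to X$, genuinely equals $X$, and that the intersection $A_1\cap B_1$, formed as a pullback, satisfies $(A_1+B_1)/B_1\cong A_1/(A_1\cap B_1)$. All of these hold in an abelian category, but they must be invoked as categorical facts rather than verified by elementwise computation.
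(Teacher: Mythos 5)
Your proof is correct, and it is the classical Schreier/Jordan--H\"older argument; note that the paper itself offers no proof of this statement, citing it directly as \cite[Thm~2.1]{Se}, so there is no in-paper argument to compare against. Two small points deserve to be made explicit in a careful write-up. First, your existence step and your claim that $C=A_1\cap B_1$ has finite length both depend on the definition of finite length as a finite bound on lengths of strictly ascending chains of subobjects; with that definition both are immediate (chains in a subobject are chains in $X$), whereas if finite length is defined as ``admits a composition series'' you need the standard lemma that subobjects inherit this, which your maximal-chain existence argument in fact supplies. Second, in the uniqueness induction the symmetric application of the inductive hypothesis to $B_1$ is legitimate only because $\Sigma'$ exhibits a composition series of $B_1$ of length $r-1<r$ (via $B_1\supset C$ followed by the chosen series of $C$), not because of anything known a priori about the second given series; your phrasing ``induction on the length of a shortest composition series'' covers this, but it is worth stating that $B_1$ enters the inductive hypothesis through $\Sigma'$. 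The purely categorical ingredients you flag --- the correspondence theorem identifying subobjects of $A_i/A_{i+1}$ with intermediate subobjects $A_{i+1}\subset Y\subset A_i$, and the second isomorphism theorem $(A_1+B_1)/A_1\cong B_1/(A_1\cap B_1)$ with $A_1+B_1$ the image of $A_1\oplus B_1\to X$ and $A_1\cap B_1$ the pullback --- do hold in any abelian category, so the element-free versions of your steps all go through.
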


\begin{lemma}\label{ev}
Let $K$ be algebraically closed, then an object $Y$ in $\langle X\rangle_\otimes$ is simple if and only if $Y_L$ is simple in $\langle X_L\rangle_\otimes$.
\end{lemma}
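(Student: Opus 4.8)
The plan is to transport the statement across Tannaka duality and reduce it to the standard fact that irreducibility over an algebraically closed field is preserved by base change. Writing $G=\pi(X,\omega)$ and $W=\omega(Y)$, the equivalence $F_\omega$ of Theorem~\ref{tannakaduality} identifies $\langle X\rangle_\otimes$ with $\Repf_K(G)$ and sends $Y$ to $W$. Under the standing hypothesis $\pi(X_L,\omega_L)\simeq\pi(X,\omega)_L$ of this subsection, together with axiom $(A1)$ giving $\omega_L(Y_L)=\omega(Y)\otimes_KL$, the corresponding equivalence identifies $\langle X_L\rangle_\otimes$ with $\Repf_L(G_L)$ and sends $Y_L$ to $W_L=W\otimes_KL$ carrying its base-changed $G_L$-action. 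Since these equivalences are exact, simple objects correspond to irreducible representations, so it suffices to prove that $W$ is an irreducible $G$-representation if and only if $W_L$ is an irreducible $G_L$-representation.

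The direction \enquote{$W_L$ irreducible $\Rightarrow$ $W$ irreducible} needs no hypothesis on $K$. The base-change functor $\centerdot_L$ is exact by definition of a geometric pair, and it is faithful because $\omega_L\circ\centerdot_L=i^*\circ\omega$ is a composite of faithful functors. Hence a proper nonzero subobject $Z\subsetneq Y$ would produce, by exactness and faithfulness, a proper nonzero subobject $Z_L\subsetneq Y_L$, contradicting the simplicity of $Y_L$.

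The substantial direction is \enquote{$Y$ simple $\Rightarrow$ $Y_L$ simple}, and this is exactly where $K=\overline{K}$ enters. I would deliberately \emph{not} argue through endomorphism rings alone: the formula $\End(Y_L)=\End(Y)\otimes_KL$ quoted above only yields $\End(Y_L)=L$, and a representation whose endomorphism ring equals the base field need not be simple — a nonsplit extension of two non-isomorphic simple objects already has this property. The right input is absolute irreducibility, obtained via the density theorem. Concretely, $W$ is a finite-dimensional comodule over $A=\Oh(G)$, so its coaction factors through a finite-dimensional subcoalgebra $C\subseteq A$; dualizing, $W$ becomes a faithful module over the finite-dimensional $K$-algebra $R=C^\ast$, for which sub-$G$-comodules of $W$ coincide with $R$-submodules and $\End_R(W)=\End_G(W)$. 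Since $K$ is algebraically closed and $W$ is simple, Schur's lemma gives $\End_R(W)=K$, whereupon Jacobson density (Burnside's theorem) shows that the structural map $R\to\End_K(W)$ is surjective.

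To finish I would base change this surjectivity. As $C$ is finite-dimensional, $C_L\subseteq A_L=\Oh(G_L)$ is a finite-dimensional subcoalgebra through which the coaction of $W_L$ factors, and $(C^\ast)_L=(C_L)^\ast=R_L$, so $W_L$ is a module over $R_L$ whose sub-$G_L$-comodules are exactly its $R_L$-submodules. The map $R_L\to\End_L(W_L)=\End_K(W)\otimes_KL$ is the base change of a surjection, hence surjective, so $\End_L(W_L)\cdot w=W_L$ for every nonzero $w\in W_L$; this forces $W_L$ to be a simple $R_L$-module and therefore a simple $G_L$-representation, i.e.\ $Y_L$ is simple. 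The main obstacle is precisely the passage to absolute irreducibility: the naive endomorphism-ring test must be replaced by the density statement over the algebraically closed field $K$, after which stability under base change is purely formal.
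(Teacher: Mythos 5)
Your proof is correct, but it reaches the crucial implication by a genuinely different route than the paper. Both arguments make the same Tannakian reduction (using the standing isomorphism $\pi(X_L,\omega_L)\simeq\pi(X,\omega)_L$) to the statement that a simple representation $V$ of an algebraic $K$-group $G$ over the algebraically closed field $K$ stays simple after base change, and both handle the easy direction by exactness and faithfulness of $\centerdot_L$. For the hard direction, however, the paper never passes through absolute irreducibility: it takes a hypothetical nontrivial subrepresentation $W\subset V_L$, observes that it is already defined over some finite type $K$-algebra $A$ (as a subrepresentation of $V_A$ under $G_A$), and evaluates at closed points $z\in\Spec A$ --- which are rational precisely because $K=\bar{K}$, this being where algebraic closedness enters for the paper --- to obtain a map $\ev_z(W)\to V$ of $G$-representations that is nonzero for some $z$ (otherwise $W\to V_L$ would be zero), whose image is then a nonzero subrepresentation of $V$ of dimension strictly less than $\dim V$, contradicting simplicity. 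You instead prove absolute irreducibility algebraically: factoring the coaction through a finite-dimensional subcoalgebra $C\subset\Oh(G)$, applying Schur's lemma (where $K=\bar{K}$ enters for you) and Jacobson density to obtain the surjection $C^\ast\twoheadrightarrow\End_K(W)$, and then base-changing this surjection, which is purely formal. Your route is somewhat longer but entirely representation-theoretic, correctly diagnoses why the formula $\End(Y_L)=\End(Y)\otimes_KL$ alone cannot suffice (your nonsplit-extension caveat is accurate), and delivers the stronger Burnside-type conclusion that the dual algebra surjects onto all of $\End_K(W)$; the paper's route is shorter and geometric, a spreading-out and specialization argument of the same flavor as the density lemma it proves in Section~\ref{four}, with the Nullstellensatz (rationality of closed points) playing the role that Schur plus density plays for you. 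Both are complete proofs.
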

\begin{proof}
One direction is trivial, for the converse it is enough to prove that if a representation $V$ of an algebraic $K$-group $G$ is simple then its base change to $L/K$ is simple as well.
Note that if $W\subset V_L$ is a (nontrivial) sub-representation, it is defined over some finite type $K$-algebra $A$, that is $W$ is defined as a sub-representation of $V_A$ of $G_A$.
In particular, for every closed (hence rational) point $z\in\Spec A$ we can evaluate $W$ in $z$ and get $ev_z(W\subset V_L)=ev_z(W)\to V$. All we need to show is then that $ev_z(W\to V_L)$ is not the zero morphism for some closed point $z\in\Spec A$. But this must be the case as otherwise $W\to V_L$ would also be the zero morphism.
\end{proof}

\begin{remark} If $K$ is not algebraically closed, $G$ is an affine group over $K$ and $\rho:G\to GL(V)$ is an irreducible representation, it is clearly not in general true that $\rho\otimes id:G_{\bar{K}}\to GL(V_{\bar{K}})$ is irreducible, an example being the subgroup of $\SL_{2,\R}$ of matrices of the form 
$\left(\begin{smallmatrix}
s&t\\-t&s
\end{smallmatrix}\right)$ acting on $\R^2$.
\end{remark}

\begin{corollary}
If $K$ is algebraically closed, simple objects in $\langle X_L\rangle_\otimes$ are of the form $X'_L$ for some $X'$ simple object in $\langle X\rangle_\otimes$.
\end{corollary}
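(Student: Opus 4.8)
The plan is to reduce the statement to the Jordan--Hölder theorem together with Lemma~\ref{ev}. Let $S$ be a simple object of $\langle X_L\rangle_\otimes$. Since $\langle X_L\rangle_\otimes$ is the Tannakian category spanned by $X_L$, every one of its objects---in particular $S$---is a subquotient of some linear algebra construction $\mathcal{C}(X_L)$; and because the base change functor $\centerdot_L$ is an additive, exact tensor functor, it commutes with the formation of such constructions, so that $\mathcal{C}(X_L)=\mathcal{C}(X)_L$. Thus $S$ is a simple subquotient of $\mathcal{C}(X)_L$ for a suitable $\mathcal{C}$, and in particular it occurs as a composition factor of $\mathcal{C}(X)_L$.

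The next step is to compute the composition factors of $\mathcal{C}(X)_L$ from those of $\mathcal{C}(X)$. The object $\mathcal{C}(X)$ has finite length in $\langle X\rangle_\otimes$ (its objects correspond to finite dimensional representations of $\pi(X,\omega)$), so by the Jordan--Hölder theorem it admits a composition series
\[0=A_r\subset A_{r-1}\subset\dotsm\subset A_0=\mathcal{C}(X)\]
with simple factors $X_i=A_i/A_{i+1}\in\langle X\rangle_\otimes$. Applying the exact functor $\centerdot_L$ yields a filtration of $\mathcal{C}(X)_L$ whose successive quotients are the $(X_i)_L$, and by Lemma~\ref{ev} each $(X_i)_L$ is simple in $\langle X_L\rangle_\otimes$. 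Hence this base-changed filtration is itself a composition series of $\mathcal{C}(X)_L$, whose composition factors are exactly the $(X_i)_L$.

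Finally I would invoke the uniqueness part of the Jordan--Hölder theorem: since $S$ is a composition factor of $\mathcal{C}(X)_L$, it must be isomorphic to one of the $(X_i)_L$. Setting $X'=X_i$ then produces a simple object of $\langle X\rangle_\otimes$ (simple by Lemma~\ref{ev}, or directly by construction) with $X'_L\simeq S$, as required. The argument is essentially a bookkeeping of composition factors, so I do not expect a deep obstacle; the two points that deserve care are the passage from \emph{subquotient} to \emph{composition factor}---that is, the observation that a simple subquotient of an object of finite length appears among its Jordan--Hölder factors---and the verification that $\centerdot_L$ genuinely carries a composition series to a composition series. This last point is precisely where the exactness of the base change functor and the preservation of simplicity supplied by Lemma~\ref{ev} are both indispensable.
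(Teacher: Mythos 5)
Your proposal is correct and follows essentially the same route as the paper: base change a composition series of $\mathcal{C}(X)$ along the exact functor $\centerdot_L$, use Lemma~\ref{ev} to see the factors $(X_i)_L$ remain simple, and conclude via Jordan--H\"older uniqueness that any simple object of $\langle X_L\rangle_\otimes$, being a simple subquotient of some $\mathcal{C}(X)_L$, is among these factors. You actually spell out two steps the paper leaves implicit (that $\mathcal{C}(X_L)=\mathcal{C}(X)_L$ and that a simple subquotient of a finite-length object occurs among its composition factors), but the underlying argument is identical.
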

\begin{proof}
If $0\subset X_r\subset\dotsm\subset X$ is a composition series for $X\in\mathcal{T}$ then by the previous lemma $0\subset(X_r)_L\subset\dotsm\subset X_L$ is a composition series for $X_L$. If $X'$ is a sub-object or a quotient of $X$ then its composition factors are isomorphic to a subset of the composition factors of $X$. This implies in particular that the simple objects in $\langle X_L\rangle_\otimes$ are the composition factors of $\mathcal{C}(X_L)$ for some construction of linear algebra $\mathcal{C}$, which completes the proof.
\end{proof}

\subsection{Positive characteristic $p$-curvature conjecture}

The following theorem, proved by Esnault and Langer and refined in \cite{bat:pcur} states (we omit the fiber functor when the ground field is algebraically closed as in this case the monodromy group is unique up to isomorphism):
\begin{theorem}[{\cite[Thm.~1.2]{EL:pcur},\cite[Thm.~4.3]{bat:pcur}}]\label{eldiv}
Let $K$ be an algebraically closed field of positive characteristic $p$. Let $X\to S$ be a smooth proper  morphism of $K$-varieties with geometrically connected fibers and let  $\E\in\D\Mod(X/S)$. Assume that there exists a dense subset $\tilde{S}\subset S(K)$ such that, for every $s\in \tilde{S}$, the stratified bundle $\E_s$ has finite monodromy and that the highest power of $p$ dividing $|\pi(\E_s)|$ is bounded over $\tilde{S}$. Let $\ebar$ be a geometric generic point of $S$, then
\begin{itemize}
\item[i)] there exists $f_\ebar:Y_\ebar\to X_\ebar$ a finite \'etale cover such that $f^*\E_\ebar$ decomposes as direct sum of stratified line bundles;
\item[ii)] if $K\neq \bar{\F}_p$ then $\pi(\E_\ebar)$ is finite.
\end{itemize} 
\end{theorem}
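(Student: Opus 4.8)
The plan is to study the geometric generic monodromy $\pi(\E_\ebar)$ by propagating the finiteness available on the dense set $\tilde S$ out to the generic fibre, via the dictionary between finite monodromy and finite étale covers together with specialization of the fundamental group along $X\to S$. Since this is the positive equicharacteristic analogue of the Grothendieck--Katz $p$-curvature phenomenon, the genuinely deep input is \cite{EL:pcur} and its refinement \cite{bat:pcur}; what follows is the strategy those references carry out.

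First I would use that, for a stratified bundle on a smooth variety over an algebraically closed field, finiteness of the monodromy group is equivalent to the bundle being trivialized by a finite Galois étale cover whose Galois group is the monodromy group itself. Thus for each $s\in\tilde S$ there is a connected finite étale $h_s\colon Y_s\to X_s$ with group $G_s\cong\pi(\E_s)$ trivializing $\E_s$, and by hypothesis the $p$-part of $|G_s|$ is bounded by a fixed $p^A$. I would then separate the tame (prime-to-$p$) and wild ($p$-)parts of each $G_s$: the prime-to-$p$ quotients correspond to tame covers, which by Grothendieck's specialization theorem for the prime-to-$p$ fundamental group of the smooth proper family $X\to S$ all arise from covers of $X_\ebar$, whereas the wild part, of bounded order at most $p^A$, contributes only a finite amount of additional data.

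The main obstacle is precisely the passage from finiteness on $\tilde S$ to boundedness of $\pi(\E_\ebar)$: a priori this could be a large algebraic group, and one must exclude any positive-dimensional non-toral part. Here the boundedness of the $p$-part is indispensable, since it keeps the Frobenius pullbacks $F^{n\ast}\E_\ebar$ within finitely many isomorphism classes, and it must be combined with a semicontinuity argument for the dimension of the monodromy along the family. Assembling the resulting trivializations over a single finite étale cover $f_\ebar\colon Y_\ebar\to X_\ebar$ leaves a residual stratified bundle whose monodromy is diagonalizable, so that $f^{\ast}\E_\ebar$ is a direct sum of stratified line bundles and $\pi(\E_\ebar)^{\circ}$ is a torus; this yields (i).

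Finally, for (ii) I would show this residual torus is trivial as soon as $K\ne\bar{\F}_p$. A nontrivial toral part produces a stratified line bundle $\mathcal{L}$ of infinite order on $X_\ebar$; after spreading $\mathcal{L}$ out over a dense open of $S$ and specializing, finiteness of $\pi(\E_s)$ forces $\mathcal{L}_s$ to be a torsion line bundle on $X_s$ for every $s\in\tilde S$. The contradiction is that, when $K\ne\bar{\F}_p$, the fibres $X_s$ live over a field that is not $\bar{\F}_p$, so a family of line bundles of infinite generic order cannot become torsion along a dense set of such points, whereas over $\bar{\F}_p$ every line bundle on a variety is torsion and the special fibres impose no condition---this is exactly why the hypothesis is needed, and making the step precise over possibly countable fields is the arithmetic heart of the argument. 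Throughout, the base-change invariance of the monodromy group from Corollary~\ref{strat} is what legitimizes these field manipulations, identifying $\pi(\E_\ebar)$ over $\overline{k(S)}$ with a base change from a finitely generated subfield on which the specialization arguments take place; it is the same invariance that later upgrades this algebraically closed statement to arbitrary base fields in Theorem~\ref{elgen}.
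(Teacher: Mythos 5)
The first thing to say is that the paper contains no proof of this statement to compare against: Theorem~\ref{eldiv} is imported wholesale from \cite{EL:pcur} (as refined in \cite{bat:pcur}) and is used as a black box in the proof of Theorem~\ref{elgen}. Your proposal openly acknowledges this (``the genuinely deep input is \cite{EL:pcur}\dots what follows is the strategy those references carry out''), so it must be judged as a reconstruction of the cited proofs, and as such it gets the broad contour right --- finite monodromy of a stratified bundle over an algebraically closed field corresponds to trivialization on a finite \'etale Galois cover (this uses dos Santos's smoothness of the monodromy group, so that ``finite'' implies ``finite \'etale''), the prime-to-$p$ parts are governed by Grothendieck's specialization theorem for the smooth proper family, the bounded $p$-part is the extra hypothesis, the residual connected monodromy being a torus gives the decomposition into stratified line bundles in (i), and (ii) reduces to an arithmetic statement about torsion line bundles that fails precisely over $\bar\F_p$. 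The closing remark about base change is also sound and not circular: inside this theorem $K$ is algebraically closed, but the finitely generated fields of definition produced by spreading out are not perfect, so inseparable extensions such as $\overline{k(S)}/k(S)$ do occur and Corollary~\ref{strat} (not merely Gabber's separable case) is legitimately relevant.

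However, the two steps where the actual mathematics of \cite{EL:pcur} and \cite{bat:pcur} lives are asserted rather than argued, and as written they would fail. First, there is no ``semicontinuity argument for the dimension of the monodromy along the family'' to invoke: the entire difficulty of the theorem is that the generic monodromy can a priori be a positive-dimensional group even though every special monodromy over $\tilde S$ is finite, and no general principle bounds the generic dimension by its values on a dense set of closed points. Likewise, the claim that ``the boundedness of the $p$-part keeps the Frobenius pullbacks $F^{n*}\E_\ebar$ within finitely many isomorphism classes'' begs the question --- finiteness of the set of isomorphism classes of Frobenius descents is essentially equivalent to the finiteness/decomposition one is trying to prove. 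In \cite{EL:pcur} this gap is filled by moduli theory: the Frobenius-descent bundles on the fibres are numerically flat (strongly semistable with vanishing Chern classes), Langer's boundedness theorem places them in a quasi-projective moduli scheme, and it is the combination of this boundedness with the density of $\tilde S$ and covers of bounded degree that propagates the fibrewise structure to the generic fibre; none of this mechanism appears in your sketch. Second, for (ii), the sentence ``a family of line bundles of infinite generic order cannot become torsion along a dense set of such points'' is exactly the crux, and it is not a formal statement: the stratified structure produces an infinitely $p$-divisible point of $\mathrm{Pic}^0$ over the function field, and one needs the Lang--N\'eron theorem (finite generation of the Mordell--Weil group modulo the $K/\bar\F_p$-trace) to conclude that such a point is torsion when $K\neq\bar\F_p$; you flag this as ``the arithmetic heart'' but supply no argument. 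So the proposal is an honest summary of the strategy of the cited papers, but with the two load-bearing inputs --- Langer's boundedness in moduli and Lang--N\'eron --- left as gaps, it does not constitute a proof.
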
 

We can apply the results in the previous section in order to get rid of any condition on the base field $K$, hence getting the following
\begin{theorem}\label{elgen} Let $K$ be any field. Let $X\to S$ be a smooth proper  morphism of $K$-varieties with geometrically connected fibers admitting a section $\sigma:S\to X$. Let $\E\in\D\Mod(X/S)$ and assume there exists a dense subset $\tilde{S}$ of rational points such that, for every $s\in \tilde{S}$, $\pi(\E_s,\omega_{\sigma(s)})$ is finite and that the highest power of $p$ dividing $|\pi(\E_s,\omega_{\sigma(s)})|$ is bounded over $\tilde{S}$. Then if $\ebar$ is a geometric generic point of $S$, $\pi(\E_\ebar)$ is finite.
\end{theorem}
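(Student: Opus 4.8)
The engine of the proof is Corollary~\ref{strat}: since the formation of the monodromy group commutes with \emph{arbitrary} extensions of the ground field, we are free to move the whole situation across field extensions, as long as we keep track of what happens to the geometric generic point. The plan is to reduce to the case $K$ algebraically closed, where Theorem~\ref{eldiv} applies, by two kinds of base change: an \emph{algebraic} one, which leaves the geometric generic point untouched, and a \emph{transcendental} one, which moves it but still relates the two monodromy groups through Corollary~\ref{strat}.

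First I would base change everything along $K\hookrightarrow\bar K$, obtaining $X_{\bar K}\to S_{\bar K}$, the bundle $\E_{\bar K}$ and the section $\sigma_{\bar K}$. As $S$ is smooth and geometrically connected it is geometrically integral, so $S_{\bar K}$ is integral and its function field is \emph{algebraic} over that of $S$ (being generated over it by $\bar K/K$); hence $S$ and $S_{\bar K}$ share the same geometric generic point $\ebar$, the geometric generic fibre is unchanged, and $\pi(\E_\ebar)$ is literally the same object whether computed over $K$ or over $\bar K$. Next I would check that the hypotheses survive. Each $s\in\tilde S\subset S(K)$ lifts to a unique $\bar K$-point of $S_{\bar K}$, and these lifts remain dense: a nonempty open of $S_{\bar K}$ descends to a nonempty open over a finite subextension $K'/K$, whose image in $S$ is open (the map $S_{K'}\to S$ is finite flat), meets $\tilde S$, and a point of $\tilde S$ in it lifts \emph{into} the chosen open because the fibre of $S_{K'}\to S$ over a $K$-rational point is a single point. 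Finally, for each such $s$ the fibre $X_s$ is smooth proper geometrically connected with rational point $\sigma(s)$, so Corollary~\ref{strat} gives $\pi((\E_s)_{\bar K},\omega_{\sigma(s)})=\pi(\E_s,\omega_{\sigma(s)})_{\bar K}$; base change of a finite group scheme preserves its order, hence finiteness and the exact power of $p$ dividing it, so the finiteness and bounded $p$-part conditions transfer verbatim to the dense set of lifts.

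We may therefore assume $K$ algebraically closed. If $K\neq\bar\F_p$, Theorem~\ref{eldiv}(ii) gives at once that $\pi(\E_\ebar)$ is finite. The remaining case $K=\bar\F_p$ is the delicate one, and here I would argue by \emph{enlarging} the field rather than shrinking it: fix an algebraically closed $\Omega\supsetneq\bar\F_p$ (say $\Omega=\overline{\bar\F_p(t)}$) and base change the situation over $\bar\F_p$ to $\Omega$. The $\bar\F_p$-points of $\tilde S$ give a dense set of $\Omega$-points with finite monodromy of bounded $p$-part (same density and Corollary~\ref{strat} arguments as above), so Theorem~\ref{eldiv}(ii) now applies over $\Omega$ and shows that the monodromy at the geometric generic point $\ebar_\Omega$ of $S_\Omega$ is finite. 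This base change is transcendental, so $\ebar_\Omega\neq\ebar$; but $\ebar_\Omega\to\ebar\to S$ factors through $\ebar$ and $\kappa(\ebar_\Omega)$ is a field extension of $\kappa(\ebar)$, whence $X_{\ebar_\Omega}=X_\ebar\times_{\kappa(\ebar)}\kappa(\ebar_\Omega)$ and Corollary~\ref{strat} yields $\pi(\E_{\ebar_\Omega})=\pi(\E_\ebar)_{\kappa(\ebar_\Omega)}$. Since finiteness of a group scheme is insensitive to field extension, finiteness of $\pi(\E_{\ebar_\Omega})$ forces that of $\pi(\E_\ebar)$.

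The main obstacle is exactly the case $K=\bar\F_p$, where Theorem~\ref{eldiv}(ii) is silent and finiteness of the geometric generic monodromy genuinely cannot be read off from Esnault--Langer directly. The key observation that unlocks it is that finiteness of $\pi(\E_\ebar)$ is a property unaffected by field extension (by Corollary~\ref{strat}), so it may be tested after enlarging $\bar\F_p$ to a bigger algebraically closed field on which the theorem \emph{does} apply. Apart from this, the only points requiring care are bookkeeping ones: carefully distinguishing the algebraic base change $\bar K/K$, which fixes $\ebar$, from the transcendental one $\Omega/\bar\F_p$, which moves it, and verifying in each case that density of the rational points carrying the monodromy conditions is preserved, which reduces in both situations to the elementary fact that the fibre of a base-field extension over a rational point is a single point.
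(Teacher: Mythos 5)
Your overall route is the paper's: transfer the hypotheses across a field extension using density of the rational points plus Corollary~\ref{strat} applied fibrewise, invoke Theorem~\ref{eldiv}(ii) over a big algebraically closed field different from $\bar{\F}_p$, and pull the finiteness of the geometric generic monodromy back through Corollary~\ref{strat}, using that finiteness of a group scheme is insensitive to extension of the base field. The paper does this in one stroke (base change to an arbitrary extension, backed by a general density lemma proved in three cases: Galois descent for separable extensions, universal homeomorphism for purely inseparable ones, spreading out over an algebraically closed base), whereas you split it into an algebraic stage and a transcendental stage; your explicit bookkeeping of the geometric generic point --- unchanged under $\bar{K}/K$, moved but controlled via $\kappa(\ebar_\Omega)/\kappa(\ebar)$ under $\Omega/\bar{\F}_p$ --- is a point the paper leaves entirely implicit, and your finite-flat argument for density under algebraic extensions is a clean uniform alternative to the paper's separable/inseparable case split.

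There is, however, one step whose justification fails as stated: density in the transcendental stage. You assert that the lifts of $\tilde{S}$ are dense in $S_\Omega$ by the ``same arguments as above,'' and in your closing paragraph you claim both density statements reduce to the fact that the fibre of a base-field extension over a rational point is a single point. That fact gives you that each point of $\tilde{S}$ has a unique lift, but it does not give density: your algebraic-stage argument descended a nonempty open of $S_{\bar{K}}$ to a \emph{finite} subextension $K'/K$ and used that $S_{K'}\to S$ is finite flat, hence has open image --- and this is exactly what is unavailable when $\Omega$ is transcendental over $\bar{\F}_p$, since a nonempty open of $S_\Omega$ only descends to a finitely generated subalgebra $A\subset\Omega$, and $S_A\to S$ is not a field base change at all. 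The correct argument here is the spreading-out one in the paper's density lemma: descend the open $U\subset S_\Omega$ avoiding the lifts of $\tilde{S}$ to an open $\mathcal{U}\subset S_A$ missing the sections $\Spec A\to S_A$ induced by $\tilde{S}$, then specialize at a closed point $s\in\Spec A$ (whose residue field is $\bar{\F}_p$ by the Nullstellensatz) to produce a nonempty open of $S$ disjoint from $\tilde{S}$, contradicting density. Since the statement you need is precisely the paper's lemma, the gap is fixable, but the mechanism you offer would not close it. A second, minor slip: you justify the algebraic stage by saying ``$S$ is smooth and geometrically connected, hence geometrically integral,'' but the theorem imposes smoothness and geometric connectedness only on the fibres of $X\to S$, not on $S$; this is harmless, because the geometric generic fibre $X_\ebar$ (where Corollary~\ref{strat} is actually applied, legitimately, since $X_\ebar$ is smooth, geometrically connected, and has the rational point $\sigma(\ebar)$) is literally unchanged by the algebraic base change whatever the geometry of $S$, but the claim as written imports hypotheses the statement does not grant.
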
 

Using the results on the base change of the monodromy group, it is enough to prove the following
\begin{lemma}
Let $K$ be any field, let $S$ a scheme of finite type over $K$ and let $S'$ be a subset of the rational points which is dense in $S$. Then for every field extension $K\subset L$ the set $S'\subset S(L)=S_L(L)$ is dense in $S\otimes_KL$.
\end{lemma}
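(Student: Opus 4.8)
The plan is to prove density by reducing to the affine case and then translating the assertion into an elementary statement about the nilradical of a tensor product. This route deliberately avoids any appeal to openness or constructibility (Chevalley) for the projection $S_L\to S$, which would require $L/K$ to be finitely generated and is therefore unavailable for a general extension.

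First I would reduce to $S=\Spec A$ with $A$ a finitely generated $K$-algebra. Density is local on $S$: if $S=\bigcup_\alpha U_\alpha$ is a finite affine open cover, then $S'$ dense in $S$ forces $S'\cap U_\alpha$ dense in each $U_\alpha$, and conversely a subset of $S_L$ that is dense in every $(U_\alpha)_L$ is dense in $S_L=\bigcup_\alpha (U_\alpha)_L$. So it suffices to show that for $S=\Spec A$ the set $\{s_L\mid s\in S'\}$ is dense in $S_L=\Spec(A\otimes_K L)$, where for a $K$-point $s\in S'\subseteq S(K)=\Hom_K(A,K)$ the associated point $s_L$ is the prime $\mathfrak{p}_{s_L}=\ker\bigl(s\otimes\mathrm{id}_L\colon A\otimes_K L\to L\bigr)$.

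The core step is the following computation. I would fix a $K$-basis $\{e_i\}_{i\in I}$ of $L$, so that every $f\in A\otimes_K L$ is uniquely $f=\sum_i a_i\otimes e_i$ with almost all $a_i=0$, and then note that $(s\otimes\mathrm{id}_L)(f)=\sum_i s(a_i)\,e_i$. Since $s(a_i)\in K$ and the $e_i$ are $K$-linearly independent, $f\in\mathfrak{p}_{s_L}$ holds if and only if $s(a_i)=0$ for all $i$. Hence $f\in\bigcap_{s\in S'}\mathfrak{p}_{s_L}$ exactly when each coordinate $a_i$ vanishes at every point of $S'$. At this point density of $S'$ enters: if $a_i$ vanishes on $S'$ then $S'\subseteq V(a_i)$, and as $V(a_i)$ is closed and $S'$ dense we get $V(a_i)=\Spec A$, i.e. $a_i$ is nilpotent. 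Writing $a_i\otimes e_i=(a_i\otimes 1)(1\otimes e_i)$, each summand is nilpotent, so $f$ is a finite sum of nilpotents and hence nilpotent. Thus $\bigcap_{s\in S'}\mathfrak{p}_{s_L}$ is contained in, and therefore equal to, the nilradical of $A\otimes_K L$, which is precisely the statement that $\{s_L\mid s\in S'\}$ is dense in $S_L$.

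The main obstacle to keep clean is the passage from ``$f$ vanishes at every $s_L$'' to ``each $a_i$ vanishes at every $s$''. This is exactly where it is essential that the points of $S'$ are \emph{$K$-rational}: the evaluation $s\otimes\mathrm{id}_L$ then has coefficients $s(a_i)$ in $K$, so the $K$-linear independence of the chosen basis can be used to separate the coordinates. This feature is what makes the argument completely insensitive to whether $L/K$ is separable, inseparable, finite, or infinite, and it is what replaces the finiteness/openness input one would otherwise need.
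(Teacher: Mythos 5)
Your proof is correct, and it takes a genuinely different route from the paper's. The paper argues by d\'evissage on the extension $L/K$: for the separable part it assumes $S'$ lies in a proper closed $Z\subset S_L$, replaces $Z$ by the intersection $\bigcap_{\sigma\in\Gal(L/K)}\sigma(Z)$ of its Galois translates, and uses Galois descent to produce a proper closed $Z_0\subset S$ containing $S'$, contradicting density; it then kills the purely inseparable part by observing that $\Spec\overline{K}\to\Spec K$ is a universal homeomorphism, so density is insensitive to this base change; finally, over an algebraically closed $K$ it uses spreading out: a nonempty open $U\subset S_L$ missing $S'$ is defined over a finite type $K$-subalgebra $A\subset L$, and evaluating at a suitable closed point of $\Spec A$ yields a nonempty open of $S$ missing $S'$. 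Your argument collapses all three cases into one affine-local computation: after the (correct) reduction to $S=\Spec A$, writing $f=\sum_i a_i\otimes e_i$ with respect to a $K$-basis of $L$ and using that the evaluations $s(a_i)$ at a \emph{rational} point lie in $K$, you identify $\bigcap_{s\in S'}\mathfrak{p}_{s_L}$ with the nilradical of $A\otimes_K L$, which is exactly the density assertion; the passage from ``$a_i$ vanishes on $S'$'' to ``$a_i$ nilpotent'' is the only place density enters, and it is used correctly. Your route is more elementary (no Galois descent, no universal homeomorphism, no Nullstellensatz or spreading out), is uniform in $L$, and in fact never uses that $S$ is of finite type, so it proves a slightly more general statement. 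What it does depend on essentially is that $S'$ consists of $K$-rational points, since the coordinate-separation step needs $s(a_i)\in K$ against the $K$-linear independence of the $e_i$ --- but that is precisely the hypothesis of the lemma, so nothing is lost; the paper's heavier machinery would, by contrast, be the natural starting point if one wanted to relax rationality of the points.
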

\begin{proof}
Let us first suppose that $L$ is separable over $K$, and by way of contradiction let us assume that $S'$ is contained in a proper closed sub-scheme $Z$ of $S_L$. Let $G=\Gal(L/K)$, acting on $S_L=S\otimes_KL$ as $\sigma\mapsto id\times \sigma$, then 
\[\bar{Z}=\bigcap_{\sigma\in G}\sigma(Z)\]
is a  proper closed sub-scheme of $S_L$ which is $G$-invariant and hence is defined over $K$, that is there exists $Z_0\subset S$ a proper closed sub-scheme such that $\bar{Z}=Z_0\otimes_KL$. But as $S'$ is a subset of the $K$-rational points on $S$, it is $\sigma$ invariant, in particular $S'\subset Z_0$ which gives a contradiction as $S'$ is dense in $S$.

We can hence suppose that $K$ is separably closed, in particular that $\overline{K}/K$ is purely inseparable. But then $\iota:\Spec\overline{K}\to \Spec K$ is a universal homeomorphism, in particular if $S'$ is dense in $S_L$ if and only if it is dense in $S$.

We are thus reduced to the case where $K$ is algebraically closed. Let $U$ be an open of $S_L$ such that $S'\cap U=\emptyset$ , then $U$ is defined over some finite type $K$-algebra $A$, that is there exists $\mathcal{U}$ a closed sub-scheme of $S_A$ over $\Spec A$ such that $U=\mathcal{U}\otimes_AL$. Moreover, $\mathcal{U}$ does not intersect the image of the sections $\Spec A\to U$ induced by the set $S'$.
Then there exists a closed point $s$ in $\Spec A$ such that the $S_A\otimes_A k(s)$ intersects $\mathcal{U}$ and thus $\mathcal{U}\otimes_A k(s)$ is an open of $S$ not intersecting $S'$, which gives a contradiction.
\end{proof}

Thanks to the previous lemma and to the base change of Corollary~\ref{strat}, we can, in the proof of Theorem \ref{elgen} first base change the base field $K$ to any field extension. As Theorem \ref{eldiv} holds for any $K$ algebraically closed and different from $\bar{\mathbb{F}}_p$, we are done.

\subsection{Base change of Tannakian fundamental groups}
If $(\mathcal{T},\omega)$ and $(\mathcal{T}_L,\omega_L)$ are a geometric $(K,L)$-pair, then there is a natural homomorphism of $L$-group schemes
\[\mu:\pi(\mathcal{T}_L,\omega_L)\to \pi(\mathcal{T},\omega)_L,\]
or, equivalently, a functor
\[M:\Repf_L(\pi(\mathcal{T},\omega)_L)\to\Repf_L(\pi(\mathcal{T}_L,\omega_L).\]

The homomorphism $\mu$ is not, in general, an isomorphism, see for example  \cite[VI, Ex.~1.2.6, b]{Sa:ct} and \cite[Cor.~23]{dS:fun} for counterexample with respect to the algebraic and the stratified fundamental groups.

Nevertheless using the base change of the monodromy of the single object one can prove the following:

\begin{lemma}\label{ff}
If $(\mathcal{T},\omega)$ and $(\mathcal{T}_L,\omega_L)$ are a $(K,L)$-geometric pair, such that for every $X$ 
\[\mu_X:\pi(X_L,\omega_L)\to\pi(X,\omega)_L\]
is an isomorphism, then the morphism $\mu$ is faithfully flat.
\end{lemma}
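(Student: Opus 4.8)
The plan is to apply the Tannakian criterion for faithful flatness of a homomorphism of affine group schemes, namely \cite[Prop.~2.21]{DM}. Writing $G=\pi(\mathcal{T},\omega)$ and $H=\pi(\mathcal{T}_L,\omega_L)$, that criterion says $\mu\colon H\to G_L$ is faithfully flat if and only if the associated tensor functor $M\colon\Repf_L(G_L)\to\Repf_L(H)$ is fully faithful and its essential image is closed under subobjects, i.e. every subobject of $M(V)$ is isomorphic to $M(W)$ for some subobject $W\subset V$. Thus the whole task reduces to verifying these two categorical properties of $M$, and the per-object hypothesis $\mu_X$ is exactly what I would feed into them.

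First I would set up the dictionary. Via Theorem~\ref{tannakaduality} one identifies $\Repf_L(H)\simeq\mathcal{T}_L$ and $\Repf_K(G)\simeq\mathcal{T}$, and for $X\in\mathcal{T}$ I write $\hat X\in\Repf_L(G_L)$ for the scalar extension $\omega(X)\otimes_K L$ with its $G_L$-action; since $\mu$ is the homomorphism induced by the base change functor $\centerdot_L$ through $(A1)$, one has $M(\hat X)=X_L$. Two facts are then needed. First, every object of $\Repf_L(G_L)$ lies in $\langle\hat X\rangle_\otimes$ for some $X\in\mathcal{T}$: indeed $G=\varprojlim_X\pi(X,\omega)$, so any finite-dimensional representation of $G_L$ factors through an algebraic quotient $\pi(X,\omega)_L$, which carries the faithful representation $\hat X$, whence the representation is a subquotient of a linear construction $\mathcal{C}(\hat X)$ by \cite[3.5, Thm.]{Wa}. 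Second, the monodromy of $\hat X$ in $\Repf_L(G_L)$ is $\pi(X,\omega)_L$, so $\langle\hat X\rangle_\otimes\simeq\Repf_L(\pi(X,\omega)_L)$, while $\langle X_L\rangle_\otimes\simeq\Repf_L(\pi(X_L,\omega_L))$.

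The crucial step is then to promote the scalar isomorphism $\mu_X$ to a statement about categories: since $\mu_X\colon\pi(X_L,\omega_L)\to\pi(X,\omega)_L$ is an isomorphism, Tannaka duality turns it into an equivalence $\langle\hat X\rangle_\otimes\xrightarrow{\sim}\langle X_L\rangle_\otimes$ compatible with the forgetful functors, and this equivalence is precisely the restriction of $M$. Both required properties now follow. For full faithfulness, given $V,V'\in\Repf_L(G_L)$ I would use the first fact to choose a single $X$ with $V,V'\in\langle\hat X\rangle_\otimes$ (passing to a direct sum $\hat X_1\oplus\hat X_2$ if they come from different generators); then $\Hom(V,V')\to\Hom(M(V),M(V'))$ is bijective because $M$ restricted to $\langle\hat X\rangle_\otimes$ is an equivalence. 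For closure under subobjects, let $Y'\subset M(V)$ with $V\in\langle\hat X\rangle_\otimes$; since $\langle X_L\rangle_\otimes$ is a Tannakian subcategory it is closed under subobjects in $\mathcal{T}_L$, so $Y'\in\langle X_L\rangle_\otimes$, and as $M$ restricts to an equivalence onto $\langle X_L\rangle_\otimes$ preserving the subobject relation, $Y'\simeq M(W)$ for a subobject $W\subset V$. Applying \cite[Prop.~2.21]{DM} then gives that $\mu$ is faithfully flat.

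The main obstacle I anticipate is the bookkeeping in the crucial step: making the equivalence $\langle\hat X\rangle_\otimes\simeq\langle X_L\rangle_\otimes$ genuinely the restriction of $M$ rather than merely an abstract equivalence, which amounts to checking that $\mu_X$ is compatible with $\mu$ and with the common fibre functor $i^*\omega=\omega_L\circ\centerdot_L$, together with the verification that the scalar-extension objects $\hat X$ generate $\Repf_L(G_L)$ under subquotients and linear algebra constructions. Everything else, namely the exactness and tensor compatibility of $M$ and the reduction to finite-type quotients, is formal.
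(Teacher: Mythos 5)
Your proposal is correct and takes essentially the same approach as the paper: the paper's proof likewise reduces via \cite[Prop.~2.21]{DM} to full faithfulness of $M$ and closure of its image under subobjects, and then simply asserts that both properties ``can be checked at the level of monodromy groups, where they hold by assumption.'' Your detailed verification --- factoring any representation of $\pi(\mathcal{T},\omega)_L$ through some $\pi(X,\omega)_L$, identifying the restriction of $M$ to $\langle\hat X\rangle_\otimes$ with pullback along $\mu_X$, and transporting Hom-spaces and subobjects through the resulting equivalence with $\langle X_L\rangle_\otimes$ --- is precisely the content the paper leaves implicit in that one sentence.
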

\begin{proof}
By \cite[Prop.~2.21]{DM}, $\mu$ is faithfully flat if and only if $M$ is fully faithful and for every $X\in  \Repf_L(\pi(\mathcal{T},\omega)_L)$ and every $X'\in \Repf_L(\pi(\mathcal{T}_L,\omega_L)$ sub-object of $M(X)$, there exists $Y\subset X$ such that $X'=M(Y)$.
But both properties can be checked at the level of monodromy groups, where they hold by assumption.
\end{proof}

It is hence natural to ask the following
\begin{question}
How can the kernel of $\mu$ be described, depending on $L/K$, for the various $(K,L)$-geometric pairs on which Lemma~\ref{ff} applies?
\end{question}

\subsection{The algebraic and stratified fundamental groups}
The results about the base change of the monodromy group can also be used to extend a fundamental result by dos Santos:
\begin{theorem}\label{smooth}
Let $K$ be any field of positive characteristic and let $S$ be a smooth $K$-variety. Then the monodromy group of every stratified bundle is smooth.
\end{theorem}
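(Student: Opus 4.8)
The plan is to reduce the statement to a situation where smoothness can be detected after a faithful base change, and then exploit the base-change isomorphism for the monodromy group proven in Corollary~\ref{strat}. Recall that a group scheme of finite type over a field $K$ is smooth if and only if its base change to the algebraic closure $\overline{K}$ is smooth, and that over a perfect (in particular algebraically closed) field a group scheme of finite type is smooth precisely when it is reduced. So the first reduction I would make is: it suffices to prove that $\pi(\E_{\overline{K}},\omega_s)$ is reduced for every stratified bundle $\E$, where $\overline{K}$ is the algebraic closure of $K$. Here I am implicitly using that the monodromy group of a stratified bundle over $S$ is an affine group scheme of finite type over $K$, which follows from Tannaka duality together with \cite[Prop.~2.20]{DM}.

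The key bridge is Corollary~\ref{strat}, which gives a functorial isomorphism
\[
\mu:\pi(\E_{\overline{K}},\omega_s)\xrightarrow{\ \sim\ }\pi(\E,\omega_s)_{\overline{K}}.
\]
Thus $\pi(\E,\omega_s)$ is smooth over $K$ if and only if $\pi(\E_{\overline{K}},\omega_s)$ is smooth over $\overline{K}$, and over the algebraically closed field $\overline{K}$ the latter amounts to reducedness. I would therefore base change the whole problem to $\overline{K}$ at the outset, so that from now on one may assume $K$ algebraically closed and only needs to show that the monodromy group of a stratified bundle over a smooth variety $S$ is reduced. This is precisely the setting in which dos Santos worked, and the point is that Corollary~\ref{strat} is exactly what licenses the passage from an arbitrary ground field to the algebraically closed case without losing information about smoothness.

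The remaining step is to invoke dos Santos' fundamental result, which establishes reducedness (equivalently smoothness) of the monodromy group of a stratified bundle over a smooth variety over an algebraically closed field of positive characteristic. Concretely I would cite his theorem on the smoothness of the stratified fundamental group scheme, and conclude that $\pi(\E_{\overline{K}},\omega_s)$ is smooth, whence by the base-change isomorphism $\mu$ and faithfully flat descent of smoothness along $\Spec\overline{K}\to\Spec K$, the group $\pi(\E,\omega_s)$ is smooth over $K$ as well.

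The main obstacle is not any of the reductions themselves, which are formal, but ensuring that the cited result of dos Santos is stated in sufficient generality to cover the monodromy group of a \emph{single} stratified bundle (i.e.\ the Tannakian group of $\langle\E\rangle_\otimes$) rather than only the full stratified fundamental group of $S$. Since any such monodromy group is a quotient of the full fundamental group scheme, and smoothness passes to quotients of smooth affine group schemes of finite type in characteristic $p$, this compatibility should follow, but it is the point that genuinely needs care. The role of this article's contribution is confined to the base-change isomorphism $\mu$, which is what removes the hypothesis that $K$ be algebraically closed from dos Santos' theorem.
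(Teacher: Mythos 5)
Your proposal is correct and follows essentially the same route as the paper: cite dos Santos (\cite[Cor.~12]{dS:fun}) for the algebraically closed case and transfer the result along the base-change isomorphism of Corollary~\ref{strat}, using that smoothness is a geometric property. Your closing worry is moot, since the cited \cite[Cor.~12]{dS:fun} is stated for the monodromy group of a single stratified bundle, not just the full stratified fundamental group scheme.
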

\begin{proof}
The result for $K$ algebraically closed is \cite[Cor.~12]{dS:fun}, and as smoothness is a geometric property the theorem follows from Corollary~\ref{strat}.
\end{proof}

Moreover in any characteristic the following holds:
\begin{proposition}
Let $L/K$ be an finite field extension and let $S$ be a geometrically connected smooth variety over $K$ and $s\in S(K)\neq\emptyset$, then the natural map
\[\mu:\pi(\D\Mod(S_L/L),\omega_s)\to \pi(\D\Mod(S/K),\omega_s)_L\]
is an isomorphism. The same holds for any algebraic extension if the characteristic of $K$ is zero.
\end{proposition}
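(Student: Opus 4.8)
The plan is to combine the single-object statement of Corollary~\ref{strat} with the standard descent criteria for homomorphisms of Tannakian groups, treating the finite case first. For $L/K$ finite, Lemma~\ref{ff} together with Corollary~\ref{strat} shows at once that
\[\mu:\pi(\D\Mod(S_L/L),\omega_s)\to\pi(\D\Mod(S/K),\omega_s)_L\]
is faithfully flat, so the entire content is to prove that it is also a closed immersion: a homomorphism of affine group schemes which is simultaneously faithfully flat and a closed immersion is an isomorphism, since the corresponding map of Hopf algebras is then both injective (faithful flatness) and surjective (closed immersion). By the criterion \cite[Prop.~2.21]{DM}, $\mu$ is a closed immersion if and only if every object of $\D\Mod(S_L/L)$ is a subquotient of an object coming from $\D\Mod(S/K)$ by base change, i.e.\ of some $\E_L$ with $\E\in\D\Mod(S/K)$.

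The key step, which I expect to be the main obstacle, is to produce such a dominating object by pushforward along the finite flat projection $p:S_L\to S$. Given $(E',\nabla')\in\D\Mod(S_L/L)$, I would first note that $\D_{S/K}\subset p_*\D_{S_L/L}$ — exactly as in the proof of Proposition~\ref{dmod} for the analogous map $S_{L[\bx]}\to S_L$ — so that $p_*E'$ is a coherent $\D_{S/K}$-module; since $p$ is finite and flat, $p_*E'$ is in addition locally free of finite rank and hence defines an object $\E:=p_*(E',\nabla')\in\D\Mod(S/K)$. Its base change is canonically $\E_L=p^{*}p_{*}(E',\nabla')$, and the counit of the adjunction $p^{*}\dashv p_{*}$ gives a morphism $\E_L=p^{*}p_{*}E'\to E'$ of $\D_{S_L/L}$-modules which is surjective (on the free local models it is the split epimorphism $B\otimes_A M\to M$, $b\otimes m\mapsto bm$). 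Thus every $(E',\nabla')$ is a quotient of $\E_L$, which establishes the closed immersion property and hence the isomorphism $\mu$ in the finite case, in any characteristic. The delicate point is precisely the inseparable situation: one must check that $p_*E'$ genuinely carries a $\D_{S/K}$-module structure, and not merely an $\Oh_S$-coherent one, and that the counit remains a $\D$-module surjection — but both follow from the inclusion of rings of differential operators above.

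For the remaining assertion, let $L/K$ be an arbitrary algebraic extension with $\operatorname{char}K=0$, so that every finite subextension is separable. I would write $L=\varinjlim_i L_i$ as the filtered union of its finite subextensions. Every object of $\D\Mod(S_L/L)$, being finitely presented over $S_L=\varprojlim_i S_{L_i}$ together with its stratification, descends to some finite level, so that $\D\Mod(S_L/L)$ is the filtered $2$-colimit of the full Tannakian subcategories $\D\Mod(S_{L_i}/L_i)\otimes_{L_i}L$. Passing to Tannakian groups turns this increasing union into an inverse limit,
\[\pi(\D\Mod(S_L/L),\omega_s)=\varprojlim_i\,\pi(\D\Mod(S_{L_i}/L_i),\omega_s)_L,\]
and the finite case proven above, applied to each $L_i/K$, identifies each term with $\pi(\D\Mod(S/K),\omega_s)_L$ compatibly in $i$. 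Since base change commutes with the filtered inverse limit defining $\pi(\D\Mod(S/K),\omega_s)$, this limit collapses to $\pi(\D\Mod(S/K),\omega_s)_L$, and the resulting comparison map is exactly $\mu$, which completes the argument.
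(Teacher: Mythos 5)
Your proof is correct, and in the finite case it is essentially the paper's own argument: faithful flatness from Lemma~\ref{ff} plus Corollary~\ref{strat}, and the closed-immersion half via \cite[Prop.~2.21]{DM} applied to pushforward along the finite flat projection $p:S_L\to S$, using $\D_{S/K}\subset p_*\D_{S_L/L}$. One remark there: the paper asserts that $\F$ is a \emph{direct factor} of $p^*p_*\F$, while you only claim it is a \emph{quotient} via the counit; your weaker formulation is actually the safer one, since for $L/K$ inseparable the counit need not split $\D$-linearly (there is no separability idempotent in $L\otimes_KL$), and a quotient is all that \cite[Prop.~2.21]{DM} requires --- accordingly, your parenthetical ``split epimorphism'' on local models should be read as plain surjectivity, the section $m\mapsto 1\otimes m$ being only $\Oh_S$-linear. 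Where you genuinely diverge is the infinite algebraic extension in characteristic zero. The key input is the same as the paper's (a flat connection is finitely presented data and in characteristic zero determines the whole $\D$-module structure, so every object and morphism descends to a finite subextension $L_i$), but the paper then simply reruns the finite-case subquotient argument, whereas you assemble the conclusion through a $2$-colimit of scalar-extended categories and an inverse limit of Tannakian groups. That packaging is heavier than necessary and hides an unverified point: the formula $\pi(\D\Mod(S_L/L),\omega_s)=\varprojlim_i\pi(\D\Mod(S_{L_i}/L_i),\omega_s)_L$ needs the essential images of the categories $\D\Mod(S_{L_i}/L_i)\otimes_{L_i}L$ to be full sub-Tannakian categories closed under subquotients, which is not automatic --- a subobject of $\F_L$ in $\D\Mod(S_L/L)$ need not be defined over $L_i$, so one must invoke a Deligne-style extension-of-scalars formalism rather than mere spreading out. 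You can bypass this entirely with ingredients you already have: given $\F\in\D\Mod(S_L/L)$, descend it to some finite $L_i/K$ and observe that $\F$ is a quotient of $(p_{i*}\F_{L_i})_L$ with $p_i:S_{L_i}\to S$ finite, so the criterion of \cite[Prop.~2.21]{DM} holds directly over $L$; together with faithful flatness (available in characteristic zero via Theorem~\ref{perfect} and Lemma~\ref{ff}) this yields the isomorphism with no limit of groups, which is exactly the paper's route. Your closing sentence about base change commuting with ``the filtered inverse limit defining $\pi(\D\Mod(S/K),\omega_s)$'' is superfluous once the transition maps in your inverse system are identified with identities; and your restriction to characteristic zero is placed correctly, since finite-level descent is precisely what fails for stratified bundles in characteristic $p$, consistent with the paper's counterexample for infinite algebraic extensions.
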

\begin{proof}
Let us first assume that $L/K$ is finite, then one one side by Lemma~\ref{ff} $\mu$ is faithfully flat, on the other if $\F\in\D\Mod(S_L/L)$ then $\F$ is a direct factor of $p_*p^*\F$, where $p:S_L\to S$. Hence by \cite[Prop.~2.21]{DM} $\mu$ is a closed immersion and thus an isomorphism. If moreover $K$ has characteristic zero, then a $\D$-module is defined by finitely many data, in particular for a general algebraic extension $L/K$, every object in $\D\Mod(S_L/L)$ is defined over some $K'\subset L$ finite over $K$, and the same argument applies
\end{proof}

Indeed if the extension is algebraic but infinite, the previous result does not hold in general, even for very simple varieties:
\begin{lemma}
Let $L/K$ be an algebraic infinite field extension, then the natural faithfully flat map
\[\mu:\pi(\D\Mod(\A^1_L/L))\to\pi(\D\Mod(\A^1_K/K))_L\]
is never an isomorphism.
\end{lemma}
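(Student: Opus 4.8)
The final statement asserts that for an algebraic \emph{infinite} extension $L/K$, the faithfully flat map $\mu:\pi(\D\Mod(\A^1_L/L))\to\pi(\D\Mod(\A^1_K/K))_L$ is \emph{never} an isomorphism. So I need to exhibit, for each such $L/K$, an obstruction to $\mu$ being a closed immersion (equivalently, to $M:\Repf_L(\pi(\D\Mod(\A^1_K/K))_L)\to\Repf_L(\pi(\D\Mod(\A^1_L/L)))$ being such that every subobject of $M(X)$ descends). Since $\mu$ is already faithfully flat by Lemma~\ref{ff}, failing to be an isomorphism means failing to be a closed immersion, which by \cite[Prop.~2.21]{DM} means finding an object on $\A^1_L$ (a stratified/$\D$-module on the affine line over $L$) that is NOT a subquotient of the base change of any object defined over $K$.

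**Strategy.** The plan is to produce a one-parameter family of objects on $\A^1$ whose defining data genuinely use infinitely much of $L$, so that no single object over $K$ can account for them after base change. The affine line is the natural testing ground because its $\D$-modules (stratified bundles, in positive characteristic; connections in characteristic zero) are governed by very explicit data — e.g.\ rank-one objects are classified by an additive/multiplicative parameter living in $L$ (think of $\Oh_{\A^1}$ with connection $d + \lambda\,dx$, or its stratified analogue, parametrized by $\lambda$). First I would reduce to the positive-characteristic case (the characteristic-zero statement being the genuinely algebraic-infinite phenomenon, or treating both uniformly via rank-one objects), and write down explicitly the monodromy group of such a rank-one object: it is a quotient of $\mathbb{G}_m$ or $\mathbb{G}_a$ (in positive characteristic, a product of copies of $\mu_{p^n}$ or of $\alpha_{p}$-type factors coming from the stratified structure).

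**The key step.** The heart of the argument is a cardinality/finite-generation obstruction. The group $\pi(\D\Mod(\A^1_K/K))$ is a projective limit of its finite-type quotients $\pi(\E,\omega_s)$, and each finite-type quotient, being the monodromy of a \emph{single} object $\E$ defined over $K$, descends to a finite subextension by the argument already used in Corollary~\ref{strat}. The plan is to show that $\pi(\D\Mod(\A^1_L/L))$ contains, among its finite-type quotients, the monodromy group of an object $\E_\lambda$ with $\lambda\in L\setminus\bigcup_{[K':K]<\infty}K'$ — which exists precisely because $L/K$ is infinite — and that this quotient cannot arise as the base change to $L$ of any finite-type quotient of $\pi(\D\Mod(\A^1_K/K))$. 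Concretely: every finite-type quotient of $\pi(\D\Mod(\A^1_K/K))_L$ is $\pi(\E,\omega_s)_L$ for some $\E$ over $K$, and by the already-established base-change isomorphism for the single object $\E$ (Corollary~\ref{strat}) this equals $\pi(\E_L,\omega_s)$, the monodromy of an object \emph{defined over $K$}. Thus if $\mu$ were an isomorphism, \emph{every} object of $\D\Mod(\A^1_L/L)$ would be a subquotient of some $\E_L$ with $\E$ over $K$; but $\E_\lambda$ with $\lambda\notin$ any finite subextension is not, since its isomorphism class, and indeed the parameter $\lambda$ recovered from it, is not fixed by $\Gal$ over any finite $K'$.

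**The main obstacle.** The technical crux will be pinning down precisely the invariant (the parameter $\lambda$, or the failure of $\Gal(L/K)$- or $\HG$-invariance) that distinguishes $\E_\lambda$ from anything descending to a finite subextension, \emph{uniformly} across separable, inseparable, and characteristic-zero cases, and handling the passage from single-object monodromy groups to the full Tannakian fundamental group $\pi(\D\Mod(\A^1_L/L))=\varprojlim_\E\pi(\E,\omega_s)$. One must verify that "not a subquotient of any $\E_L$ with $\E/K$" really is equivalent, via \cite[Prop.~2.21]{DM}, to $\mu$ not being a closed immersion — i.e.\ that the descent of \emph{subobjects} (not merely of the whole object) fails. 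I expect the rank-one case to suffice and to be clean: on $\A^1$ the relevant extension groups and the classification of rank-one stratified bundles make the parameter space literally a copy of $L$ (or $L/$(additive action of Frobenius), etc.), so that an object genuinely defined over infinitely much of $L$ furnishes the needed counterexample, completing the proof that $\mu$ is never an isomorphism.
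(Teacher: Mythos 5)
Your reduction is fine --- showing $\mu$ fails to be a closed immersion by exhibiting $\E\in\D\Mod(\A^1_L/L)$ that is not a subquotient of $\F_L$ for any $\F\in\D\Mod(\A^1_K/K)$ is exactly how the paper starts --- but your construction of the counterexample has a fatal flaw. Since $L/K$ is \emph{algebraic}, every element of $L$ lies in a finite subextension: $L=\bigcup_{[K':K]<\infty}K'$, so the parameter $\lambda\in L\setminus\bigcup_{[K':K]<\infty}K'$ that your key step requires simply does not exist. Worse, any object whose defining data consist of finitely many elements of $L$ (such as a rank-one object with a single parameter $\lambda$) is automatically defined over the finite subextension those elements generate, and by the Proposition immediately preceding the lemma ($\mu$ is an isomorphism for \emph{finite} extensions, via the direct-factor-of-$p_*p^*\F$ argument) such objects can never obstruct the closed immersion. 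For the same reason your proposed uniform treatment including characteristic zero cannot work: in characteristic zero a connection on $\A^1$ is determined by the single matrix of $\nabla_{\de_x}$, hence by finitely many elements of $L$, and indeed the paper proves $\mu$ \emph{is} an isomorphism for all algebraic extensions in characteristic zero. The lemma is a genuinely positive-characteristic phenomenon.

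The missing idea is to exploit the fact that in characteristic $p$ the ring $\D(\A^1/L)$ is not finitely generated: a single stratified bundle carries infinitely many independent structural data, namely the actions of all the divided-power operators $\de_x^{(p^i)}$, $i\in\N$. The paper takes $E$ free of rank two with basis $e_1,e_2$, sets $\de_x^{(k)}(e_1)=0$ and $\de_x^{(p^i)}(e_2)=\alpha_i\,e_1$ (zero for $k$ not a $p$-th power), where the countable family $(\alpha_i)_{i\in\N}\subset L$ generates an infinite subextension --- possible precisely because $L/K$ is infinite, even though each $\alpha_i$ is algebraic over $K$. If $\E$ were a subquotient of $\F_L$, then (after Quillen--Suslin makes $F$ globally free with $K$-rational structure matrices $M_k$) a single change-of-basis matrix $U\in\GL(L[x])$ would have to transport the $M_k$ to matrices having the $N_k$ of $\E$ as a minor; but the entries of $U$ generate a \emph{finite} subextension $L'/K$, so all the transported matrices have entries in $L'[x]$, while the $N_k$ involve all the $\alpha_i$ --- contradiction. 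In short: your obstruction must be spread across infinitely many operators inside one object, not concentrated in one parameter, and no rank-one, finitely-parametrized object can do this.
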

\begin{proof}
In order to show that $\mu$ cannot be a closed immersion, it is enough to prove that there exists $\E\in\D\Mod(\A^1_L/L)$ that is not a subquotient of $\E'_L$ for every $\E'\in\D\Mod(\A^1_K/K)$. We construct such $\E$ similarly as in \cite[Sec.~5]{bat:pcur} in the following way: first of all as $L/K$ is infinite, let us fix a countable family $(\alpha_i)_{i\in\N}$, such that they are all algebraically independent over $K$. Now consider $E$ to be the free two dimensional vector bundle on $\A^1_L$, with basis $e_1$ and $e_2$. 
Once fixed a parameter $x$, the ring of differential operators of $\A^1_L$ over $\Spec L$ is generated as a $\Oh_{\A^1_L}$-module, by $\de_x^{(k)}$ where $k$ runs along the natural numbers. The explicit relations between these generators are described in \cite[Cor.~2.5]{Ba:gen}. We define the action of $\D$ on $E$ to be given by $\de_x^{(k)}(e_1)=0$ for every $k>0$ and
\begin{equation}\label{exa}\de_x^{(k)}(e_2)=\begin{cases}
 \alpha_i \cdot e_1
 &\text{if } k=p^i, \\
\quad0&\text{else.}
\end{cases} \end{equation}
Exactly as in the proof of \cite[Lemma~5.2]{bat:pcur}, it is easy to check this actually respects the relations in the algebra $\D(\A^1_L/L)$ and hence defines a $\D$-module structure on $E$.
We claim that the resulting $\D$-module $\E$ is not subquotient of any object coming from $\D\Mod(\A^1_K/K)$. Indeed, assume there exists $\F\in\D\Mod(\A^1_K/K)$ be such that $\E\in\langle\F_L\rangle_\otimes$, then by Quillen--Suslin's theorem, the underlying module $F$ must be globally free, and fixing a basis the $\D(\A^1_K/K)$-action can be described by simply giving matrices $M_k$ with entries $m_{ij}^k$ given by $\de_x^{(k)}(e_i)=\sum_j m_{ij}^ke_j$ (see the discussion following \cite[Prop.~5.1]{bat:pcur} for a more exhaustive explanation).

In particular, $m_{ij}\in K$, now note that for every base change matrix $U\in GL_2(L[x])$, the matrices $M'_k$ in the new basis are given by
\begin{equation}\label{matrices}M'_k=\Big[\sum_{\substack {a+b=k\\a,b\geq 0 }}\de_x^{(a)}(U)M_b\Big]U^{-1}.
\end{equation}
Let us denote by $N_k$ the matrices denoting the $\D(\A^1_L/L)$-action on $F$, then if $F$ denotes the underlying vector bundle of $\F$, we can choose a basis of $F\otimes_KL$ such that every $N_k$ is the first $2\times 2$ minor of the $M'_k$. But this gives a contradiction: if $L'/K$ is the finite extension generated by the entries of the base change matrix $U$, then $\de_x^{(k)}(U)$ also have entries in $L'$ and hence $M'_k$ do too, but the $N_k$ do not, as the $\alpha_i$ are all algebraically independent over $K$.
\end{proof}

\addcontentsline{toc}{section}{\refname}
\printbibliography
\end{document}